\newcolumntype{C}{>{\centering\arraybackslash}X}
\newcolumntype{D}{>{\centering\arraybackslash}X}
\newtheorem{theorem}{Theorem}
\newtheorem{lemma}[theorem]{Lemma}
\newtheorem{proposition}[theorem]{Proposition}
\newtheorem*{claim*}{Claim}
\theoremstyle{definition}
\newtheorem*{example}{Example}
\newtheorem{definition}[theorem]{Definition}
\newtheorem{construction}[theorem]{Construction}
\newcommand{\BB}{\ensuremath{\mathcal{B}}}
\newcommand{\FF}{\ensuremath{\mathcal{F}}}
\newcommand{\cF}{\ensuremath{\mathcal{F}}}
\newcommand{\cS}{\ensuremath{\mathcal{S}}}
\newcommand{\N}{\ensuremath{\mathbb{N}}}
\newcommand{\cN}{\ensuremath{\mathcal{N}}}
\newcommand{\cU}{\ensuremath{\mathcal{U}}}
\newcommand{\cV}{\ensuremath{\mathcal{V}}}
\newcommand{\cX}{\ensuremath{\mathcal{X}}}
\newcommand{\cY}{\ensuremath{\mathcal{Y}}}
\newcommand{\cZ}{\ensuremath{\mathcal{Z}}}
\newcommand{\QQ}{\ensuremath{\mathcal{Q}}}
\newcommand{\cLa}{\mathrel{\rotatebox[origin=c]{180}{$\cV$}}}
\begin{document}

\title{Poset Ramsey number $R(P,Q_n)$. II. N-shaped poset}

\author{Maria Axenovich \footnote{Karlsruhe Institute of Technology, Karlsruhe, Germany.}
\and Christian Winter \footnote{Karlsruhe Institute of Technology, Karlsruhe, Germany. E-mail: \textit{christian.winter@kit.edu}} }
%

\maketitle

\begin{abstract}
Given partially ordered sets (posets) $(P, \leq_P)$ and $(P', \leq_{P'})$, we say that $P'$ contains a copy of $P$ if for some injective function $f\colon P\rightarrow P'$ and for any $A, B\in P$, $A\leq _P B$ if and only if $f(A)\leq_{P'} f(B)$.
For any posets $P$ and $Q$, the poset Ramsey number $R(P,Q)$ is the least positive integer $N$ such that no matter how the elements of an $N$-dimensional Boolean lattice are colored in blue and red, there is either a copy of $P$ with all blue elements or a copy of $Q$ with all red elements. 

We focus on the poset Ramsey number $R(P, Q_n)$ for a fixed poset $P$ and an $n$-dimensional Boolean lattice $Q_n$, as $n$ grows large.
It is known that $n+c_1(P) \le R(P,Q_n) \le c_2(P) n$, for positive constants $c_1$ and $c_2$. However, there is no poset $P$ known, for which $R(P, Q_n)> (1+\epsilon)n$, for $\epsilon >0$. 
This paper is devoted to a new method for finding upper bounds on $R(P, Q_n)$ using a duality between copies of $Q_n$ and sets of elements that cover them, referred to as blockers. We prove several properties of blockers and their direct relation to the Ramsey numbers. Using these properties we show that $R(\cN,Q_n)=n+\Theta(n/\log n)$, for 
a poset $\cN$ with four elements $A, B, C, $ and $D$, such that $A<C$, $B<D$, $B<C$, and the remaining pairs of elements are incomparable.
\end{abstract}

\section{Introduction}

A partially ordered set, shortly a \textit{poset}, is a set $P$ equipped with a relation $\le_P$ that is transitive, reflexive, and antisymmetric. 
For any non-empty set $\cZ$, let $\QQ(\cZ)$ be the \textit{Boolean lattice} of \textit{dimension} $|\cZ|$ on a \textit{ground set} $\cZ$, i.e.\ 
the poset consisting of all subsets of $\cZ$ equipped with the inclusion relation $\subseteq$. 
We use $Q_n$ to denote a Boolean lattice with an arbitrary $n$-element ground set. 
We refer to a poset either as a pair $(P, \leq_P)$, or, when it is clear from context, simply as a set $P$. When clear from context, we shall write $A\leq B$ instead of $A \leq_P B$, and 
$A<B$ when $A\leq B$ and $A\neq B$. When $A$ and $B$ are not comparable, we write $A\parallel B$. The elements of $P$ are often called \textit{vertices}.\\

A poset $P_1$ is an \textit{(induced) subposet} of $P_2$
if $P_1\subseteq P_2$ and for every $X_1,X_2\in P_1$, $X_1 \leq_{P_1} X_2$ if and only if $X_1 \leq_{P_2} X_2.$
An \textit{(induced) copy} of a poset $P_1$ in $P_2$ is an induced subposet $P'$ of $P_2$, isomorphic to $P_1$. We shall be considering three special posets: 
$\cV$, $\cLa$, and $\cN$, see Figure \ref{fig:VLaN}. The poset $\cV$ has three vertices $A, B,$ and $C$, $C<A$, $C<B$, and $A\parallel B$. The poset $\cLa$ has three vertices $A, B,$ and $C$, $C>A$, $C>B$, and $A\parallel B$. The poset $\cN$ has four vertices $A, B, C, D$ and relations $A<C$, $B<D$, $B<C$, $A\parallel B$, $A\parallel D$, and $C \parallel D$. \\

\begin{figure}[h]
\centering
\includegraphics[scale=0.6]{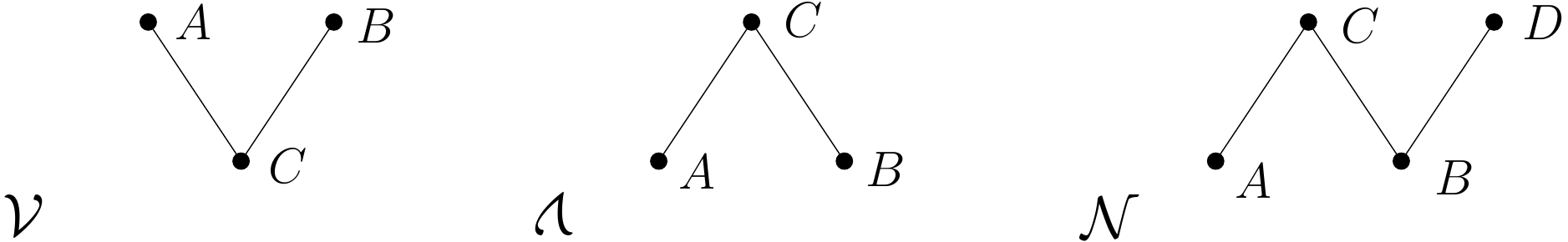}
\caption{ Hasse diagrams of posets $\cV$, $\cLa$, and $\cN$ }
\label{fig:VLaN}
\end{figure}

Extremal properties of posets and their induced subposets have been investigated in recent years and mirror similar concepts in graphs.
Carroll and Katona \cite{CK} initiated the consideration of so called Tur\'an-type problems for induced subposets. 
Most notable is a result by Methuku and P\'alvölgyi \cite{MP} which provides an asymptotically tight bound on the maximum size of a subposet of a Boolean lattice that does not have a copy of a fixed poset $P$, for general $P$. 
Their statement has been refined for several special cases, see e.g.\ Lu and Milans \cite{LM}, and M\'eroueh \cite{M}.
Further Tur\'an-type results are, for example, given by Methuku and Tompkins \cite{MT}, and Tomon \cite{T}. 
Note that Tur\'an-type properties are also investigated in depth for non-induced, so called \textit{weak} subposets, which are not considered here.
Besides that, saturation-type extremal problems are studied for induced and weak subposets, see a recent survey of Keszegh, et al.\ \cite{KLMPP}.
\\

In this paper we are dealing with Ramsey-type properties of induced subposets in Boolean lattices. 
Consider an assignment of two colors, \textit{blue} and \textit{red}, to the vertices of posets. Such a coloring $c: P \rightarrow \{blue, red\}$ is a \textit{blue/red coloring} of $P$.
A colored poset is \textit{monochromatic} if all of its vertices share the same color. A monochromatic poset whose vertices are blue is called a \textit {blue poset}. Similarly defined is a \textit{red poset}.
Extending the classical definition of graph Ramsey numbers, Axenovich and Walzer \cite{AW} introduced the \textit{poset Ramsey number} which is defined as follows. For posets $P$ and $Q$, let 

\begin{multline*}
R(P,Q)=\min\{N\in\N \colon \text{ every blue/red coloring of $Q_N$ contains either}\\ 
\text{a blue copy of $P$ or a red copy of $Q$}\}.
\end{multline*}

One of the central questions in this area is to determine $R(Q_n, Q_n)$. The best bounds currently known are 
$2n+1 \leq R(Q_n, Q_n) \leq n^2 -n+2$, see listed chronologically Walzer \cite{W}, Axenovich and Walzer \cite{AW}, Cox and Stolee \cite{CS},  Lu and Thompson \cite{LT}, Bohman and Peng \cite{BP}. 
It should be highlighted that the upper bound on $R(Q_n, Q_n)$ shows that $R(P,Q)$ is well-defined for any $P$ and $Q$ because any poset is contained as a copy in a Boolean lattice $Q_n$ for sufficiently large $n$.
\\

One subject of research on poset Ramsey numbers is the off-diagonal setting $R(P,Q_n)$ for a fixed poset $P$ and large $n$.
As general bounds the first author and Walzer \cite{AW} showed the following.
The \textit{height} $h(P)$ of a poset $P$ is defined as the size of the longest chain in $P$.
The \textit{$2$-dimension} $\dim_2(P)$ of a poset $P$ is the dimension of the smallest Boolean lattice containing a copy of $P$.
It is an easy observation that the $2$-dimension is well-defined for any $P$.

\begin{proposition}[Axenovich-Walzer \cite{AW}]
Let $P$ be a fixed poset. Then
$$n+h(P)-1 \le R(P,Q_n) \le h(P)n+\dim_2(P).$$
\end{proposition}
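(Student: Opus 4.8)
The plan is to prove the two inequalities separately; the lower bound is a one‑line explicit colouring, while the upper bound carries the real content. For the lower bound, put $h:=h(P)$ and colour $\QQ(\cZ)$ with $|\cZ|=n+h-2$ by declaring a set $X$ blue exactly when $|X|\le h-2$ and red otherwise. The blue sets then contain no chain on $h$ vertices, hence no copy of $P$ (whose height is $h$). On the red side, a red copy of $Q_n$ would contain a chain $X_0\subsetneq X_1\subsetneq\cdots\subsetneq X_n$ of $n+1$ red sets, giving $n\le|X_n|-|X_0|\le (n+h-2)-(h-1)=n-1$, impossible. So $\QQ(\cZ)$ has no blue $P$ and no red $Q_n$, and $R(P,Q_n)>n+h-2$.

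For the upper bound, set $h:=h(P)$, $d:=\dim_2(P)$, $N:=hn+d$, and fix an induced copy of $P$ in a Boolean lattice $\QQ(C)$ with $|C|=d$; let $g\colon P\to\QQ(C)$ be the corresponding embedding. Split the remaining ground set into blocks $B_1,\dots,B_h$ of size $n$, so that the ground set of $\QQ_N$ is the disjoint union of $C,B_1,\dots,B_h$. Let $P=A_1\cup\cdots\cup A_h$ be the canonical level decomposition, where $x$ lies in $A_\ell$, written $\mathrm{lev}(x)=\ell$, iff the longest chain of $P$ with top element $x$ has exactly $\ell$ vertices; recall that $x<_P y$ forces $\mathrm{lev}(x)<\mathrm{lev}(y)$. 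Now take any blue/red colouring of $\QQ_N$ with no red copy of $Q_n$; the goal is to extract a blue copy of $P$.

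The key point is that if $\psi\colon P\to\QQ(B_1\cup\cdots\cup B_h)$ is \emph{monotone} (meaning $x\le_P y$ implies $\psi(x)\subseteq\psi(y)$), then $f(x):=\psi(x)\cup g(x)$ is automatically an induced copy of $P$: $x\le_P y$ gives $f(x)\subseteq f(y)$ since both $\psi$ and $g$ are monotone, and conversely $f(x)\subseteq f(y)$ intersected with $C$ yields $g(x)\subseteq g(y)$, hence $x\le_P y$ because $g$ is an embedding. So it suffices to build a monotone $\psi$ with $\psi(x)\cup g(x)$ blue for every $x$. I would do this greedily along a linear extension $x^1,\dots,x^M$ of $P$, maintaining the invariant $\psi(x)\subseteq B_1\cup\cdots\cup B_{\mathrm{lev}(x)}$. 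When processing $x^a$ at level $\ell$, set $\Phi_a:=\bigcup_{x^b<_P x^a}\psi(x^b)$; by the invariant and the level property, $\Phi_a\subseteq B_1\cup\cdots\cup B_{\ell-1}$, so $\Phi_a$ is disjoint from $B_\ell$ and from $C$. Then $\{\Phi_a\cup W\cup g(x^a):W\subseteq B_\ell\}$ is an induced copy of $Q_n$, so (no red $Q_n$) some member is blue; picking such a $W=:W_a$ and setting $\psi(x^a):=\Phi_a\cup W_a$ preserves the invariant and monotonicity and makes $f(x^a)$ blue. The resulting $f(P)$ is a blue copy of $P$, so $R(P,Q_n)\le hn+d$.

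The step that requires care is exactly the element‑by‑element lifting. The tempting shortcut — lifting a whole level $A_\ell$ by one common set $W_\ell\subseteq B_\ell$ — can fail, because the collection of $W\subseteq B_\ell$ that are ``blue for $x$'' can be nonempty for each individual $x\in A_\ell$ yet have empty intersection over all of $A_\ell$ (a single copy of $Q_n$ merely fails to be monochromatically red). Allowing $W_a$ to depend on $x^a$ repairs this, but then one must control the base $\Phi_a$ over which each lift happens; the block‑per‑level bookkeeping is precisely what keeps $\Phi_a$ inside $B_1\cup\cdots\cup B_{\ell-1}$, so that a full $n$‑dimensional block $B_\ell$ remains free and the copy of $Q_n$ invoked at each step genuinely exists. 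The remaining checks — that $g$ embeds $P$ into $\QQ(C)$, that the level decomposition has the stated monotonicity, and that $f$ is injective — are routine.
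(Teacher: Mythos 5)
Your proof is correct. The lower bound is exactly the layered colouring the paper sketches (and your version gets the dimension right: the colouring lives on $Q_{n+h(P)-2}$, with $h(P)-1$ blue layers and $n$ red layers, whereas the paper's parenthetical remark says $Q_{n+h(P)-1}$, an off-by-one in the sketch). The upper bound is not proved in this paper at all — it is cited from Axenovich–Walzer — and your block-per-level greedy embedding (one fresh $n$-set $B_\ell$ per level of $P$, plus $\dim_2(P)$ coordinates carrying a fixed embedding $g$ of $P$ to enforce the induced relations, lifting element by element along a linear extension so that each lift happens over a copy of $Q_n$ on the untouched block $B_\ell$) is a complete and correct argument in the same spirit as the original proof; in particular your observation that one cannot lift a whole level by a single common $W_\ell$ is exactly the right point of care.
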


Here, the lower bound is trivial and is obtained by a coloring of $Q_{n+h(P)-1}$ in which all vertices in each layer $\{X\in Q_{n+h(P)-1}: |X|=\ell\}$, $\ell\le n+h(P)-1$, have the same color, red or blue, and there are $n$ red layers and $h(P)-1$ blue layers.
\\

For the off-diagonal setting $R(Q_m, Q_n)$ with $m$ fixed and $n$ large, an exact result is only known if $m=1$. It is easy to see that $R(Q_1, Q_n)=n+1$.
For $m=2$, it was shown in \cite{AW} that $R(Q_2, Q_n) \leq 2n+2$. This was improved by Lu and Thompson \cite{LT} to $R(Q_2, Q_n)\leq (5/3)n +2$, and by Gr\'osz, Methuku, and Tompkins \cite{GMT} who showed for $\epsilon>0$ and $n\in\N$ being sufficiently large:
$n+3 \leq R(Q_2,Q_n) \le n + \frac{(2+\epsilon)n}{\log n}.$ Finally, the present authors \cite{QnV} proved a lower bound asymptotically matching the upper one:

\begin{theorem}[Grosz-Methuku-Tompkins \cite{GMT}, Axenovich-Winter \cite{QnV}]\label{Q2}
$$R(Q_2,Q_n)=n+\Theta\big(\tfrac{n}{\log n}\big).$$
\end{theorem}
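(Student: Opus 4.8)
The plan is to prove the two matching estimates separately, organising both around the blocker viewpoint. Recall that $R(Q_2,Q_n)>N$ precisely when $Q_N$ admits a subfamily $\mathcal{B}$ that is $Q_2$-free and is a \emph{blocker} of $Q_n$, i.e.\ meets every copy of $Q_n$ in $Q_N$ (one colours $\mathcal{B}$ blue and its complement red). So the upper bound $R(Q_2,Q_n)\le n+\frac{(2+\epsilon)n}{\log n}$ is equivalent to the assertion that once $N\ge n+\frac{(2+\epsilon)n}{\log n}$, no $Q_2$-free $\mathcal{B}\subseteq Q_N$ can be a blocker --- equivalently, the red complement always contains a copy of $Q_n$ --- while the lower bound is equivalent to exhibiting, for some $N=n+\Theta(n/\log n)$, a single $Q_2$-free blocker.

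For the upper bound I would fix a $Q_2$-free blue family $\mathcal{B}\subseteq Q_N$ and construct a red copy of $Q_n$ by hand. The target form $n+\frac{(2+\epsilon)n}{\log n}$ suggests cutting the ground set into roughly $n/\log n$ blocks of size roughly $\log n$, extracting from each block's subcube a red sub-copy of $Q_{(\log n)-O(1)}$, and amalgamating these into a red $Q_n$ at a cost of only $O(1)$ lost dimensions per block. The main obstacle, which I expect to dominate the work, is that the colouring induced on a single block need not admit such a sub-copy in isolation --- a $Q_2$-free family really can block $Q_{m-1}$ inside $Q_m$ for all large $m$, as the lower bound shows --- so the extraction in each block must be forced to succeed on average over the choices left free by the earlier blocks. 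This calls for the global sparsity of $\mathcal{B}$ (a Tur\'an-type bound showing a $Q_2$-free family fills only a vanishing fraction of $Q_N$), together with an averaging over random low-codimension subcubes on which blue is especially sparse; the constant $2$ then comes out of balancing the per-block loss against the block size.

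For the lower bound I would build the blocker explicitly. Partition the ground set into $m=\Theta(n/\log n)$ blocks, each of size $\Theta(\log n)$, and let $\mathcal{B}$ consist of those sets that are \emph{block-aligned} --- empty or full on each block --- on all blocks but one, and on the exceptional block restrict to an element of a fixed ``gadget'' subfamily of that block's subcube. The blocks and gadgets are to be chosen so that no diamond can be assembled from such sets, which forces the gadgets (and the admissible patterns of which blocks are full) to be diamond-free in a strong sense that also rules out certain meets and joins; granting this, checking that $\mathcal{B}$ is $Q_2$-free is a finite case analysis on where the four vertices of a hypothetical diamond are block-aligned. The heart of the argument is checking that $\mathcal{B}$ is a blocker: one must show that every copy of $Q_n$ in $Q_N$, having only $N-n=\Theta(n/\log n)$ spare coordinates, is forced to contain a vertex that is block-aligned on all but one block and meets the gadget there. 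This needs a structural lemma describing how an induced copy of $Q_n$ can lie inside $Q_N$ relative to the block partition, and in particular a bound on how many blocks such a copy can fail to be aligned on; handling \emph{twisted}, non-interval copies of $Q_n$ here is, I expect, the principal difficulty. It is exactly the tension between diamond-freeness of $\mathcal{B}$ and the blocking property that pins the block size at $\Theta(\log n)$, and hence the answer at $n+\Theta(n/\log n)$.
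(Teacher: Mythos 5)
Your blocker reformulation of both bounds is fine (and matches the philosophy of this paper, which however only cites Theorem~\ref{Q2} rather than proving it), but what you have written is a plan in which the decisive steps are explicitly deferred, and in both directions the deferred step is the whole theorem. For the upper bound, you yourself observe that a $Q_2$-free family can block $Q_{m-1}$ inside $Q_m$ for all large $m$, so the per-block extraction of a red $Q_{\log n-O(1)}$ simply fails block-locally; the appeal to ``global sparsity of $\mathcal{B}$ plus averaging over random low-codimension subcubes'' is not an argument, since sparsity of a diamond-free family (of order $2^N/\sqrt N$) says nothing about how it meets the specific $n/\log n$ subcubes your amalgamation needs, and the choices ``left free by earlier blocks'' interact with later blocks in exactly the way a union bound or naive averaging cannot control. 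The known proof of Gr\'osz--Methuku--Tompkins does not localize to blocks at all: it attempts a global coordinate-by-coordinate embedding of a red $Q_n$, converts each failure into blue elements organized into long chains, and then plays the abundance of blue chains against $Q_2$-freeness by a counting argument; the alternative route of the present authors (and of this paper, via Theorem~\ref{blockers-Ramsey}) is likewise a global structural analysis of $P$-free blockers, not a block-by-block extraction. Nothing in your sketch shows the block scheme can be pushed to $n+O(n/\log n)$.

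For the lower bound the situation is the same: the gadget family, the admissible alignment patterns, and the verification that no diamond arises are all left unspecified (``granting this''), and the blocking property against arbitrary, twisted copies of $Q_n$ --- which you correctly single out as the principal difficulty --- is precisely the content of the actual construction and is absent. Note also that the cited lower bound is obtained by exhibiting a blue family that is $\mathcal{V}$-free (hence $Q_2$-free, since $Q_2$ contains $\mathcal{V}$) and meets every copy of $Q_n$, yielding $n+\frac{n}{15\log n}$; your proposal never produces a concrete family, so no constant, and in fact no bound of the form $n+\Omega(n/\log n)$, is established. As it stands the proposal identifies the right objects and the right tension (diamond-freeness versus blocking, with $\log n$ as the natural scale) but proves neither inequality.
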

Further known bounds on poset Ramsey numbers include results of Chen et al. \cite{CCCLL}, \cite{CCLL}, Chang et al. \cite{CGLMNPV} as well as Winter \cite{QnA}.
\\

It is unknown whether there exists a poset $P$ such that $R(P,Q_n)\ge (1+c)n$ for some $c>0$. 
Therefore it is natural to consider the value of $R(P,Q_n)-n$ and determine its asymptotic behaviour. 
We say that a \textit{tight bound} on $R(P,Q_n)$ is a function $f(n)$ such that $R(P,Q_n)=n+\Theta(f(n))$.
A tight bound is only known for a handful of posets, see for example Theorem \ref{Q2}, and Winter \cite{QnK}.
\\

A poset is \textit{trivial} if it does not contain a copy of either $\cV$ or $\cLa$. Otherwise we refer to it as \textit{non-trivial}.
For trivial posets $P$ the trivial lower bound is asymptotically tight.

\begin{theorem}[Axenovich-Winter  \cite{QnV}]\label{thm:QnV}
If $P$ is a trivial poset, then $R(P,Q_n)=n+c(P)$ where $c(P)$ is a constant only depending on $P$.
If $P$ is a non-trivial poset, then $R(P,Q_n)\ge R(\cV,Q_n)\ge n+\frac{n}{15\log n}$.
\end{theorem}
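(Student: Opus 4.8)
We argue the two cases separately; call a $2$-colouring of $Q_N$ \emph{$(P,Q_n)$-good} if it has no blue copy of $P$ and no red copy of $Q_n$, so that $R(P,Q_n)-1$ is the largest $N$ for which $Q_N$ admits a $(P,Q_n)$-good colouring.

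\textbf{Trivial posets.} First I would show that a poset whose comparability graph is connected and which contains no copy of $\cV$ or $\cLa$ is a chain: along a shortest comparability-path $x_0,\dots,x_k$ ($k\ge 2$) between two incomparable elements, the triple $x_0,x_1,x_2$ must form a copy of $\cV$ or of $\cLa$, the monotone case being excluded by transitivity and minimality of $k$. Hence a trivial $P$ is a disjoint union of $m=m(P)$ pairwise incomparable chains, the longest of length $h=h(P)$, and $P$ is an induced subposet of the poset $P^{\ast}$ made of $m$ pairwise incomparable chains of length $h$; since ``no blue $P^{\ast}$'' implies ``no blue $P$'' it suffices to bound $R(P^{\ast},Q_n)$. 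Two deletion lemmas then do the work. (1) For any chain $\calC\subseteq Q_N$, splitting $Q_N$ into the four copies of $Q_{N-2}$ given by the trace on two fixed coordinates $x\ne y$, the chain meets at most three of them (the traces $\{x\}$ and $\{y\}$ cannot both occur on a chain), so $Q_N$ minus $k$ chains still contains a copy of $Q_{N-2k}$. (2) Since $R(Q_1,Q_m)=m+1$, every antichain of $Q_N$ misses a copy of $Q_{N-1}$, so $Q_N$ minus $\ell$ antichains contains a copy of $Q_{N-\ell}$; applying (1) then (2) inside the surviving subcube, $Q_N$ minus $k$ chains and $\ell$ antichains contains a copy of $Q_{N-2k-\ell}$. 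Finally, if a colouring of $Q_N$ has no blue $P^{\ast}$ then its blue poset $B$ has no $m$ pairwise incomparable chains of length $\ge h$; I would show $B$ is a union of $k=k(h,m)$ chains together with a poset of height $<2h$ (equivalently $2h-1$ antichains) by letting $T$ be the set of elements of $B$ lying on a chain of length $\ge 2h$, so that $B\setminus T$ has bounded height, and bounding $\mathrm{width}(T)$: a large antichain of $T$ gives, after halving and using up--down symmetry, many long chains with prescribed minima, and a Ramsey argument on the pairwise comparability patterns of these chains extracts $m$ pairwise incomparable ones, a contradiction; Dilworth then covers $T$ by $\mathrm{width}(T)$ chains. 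Combining, the red poset contains $Q_{N-2k(h,m)-(2h-1)}\supseteq Q_n$ as soon as $N\ge n+2k(h,m)+2h-1$, giving $R(P,Q_n)\le n+c(P)$, which together with the Proposition's bound $R(P,Q_n)\ge n+h-1$ yields $R(P,Q_n)=n+\Theta(1)$. The one delicate point here is the Ramsey estimate bounding $\mathrm{width}(T)$.

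\textbf{Non-trivial posets; reduction to $\cV$.} Complementation $X\mapsto [N]\setminus X$ is an anti-automorphism of $Q_N$ carrying a blue copy of $\cLa$ to a blue copy of $\cLa^{\ast}=\cV$ and preserving the property ``red copy of $Q_n$'' (as $Q_n^{\ast}=Q_n$), so $(\cLa,Q_n)$-good and $(\cV,Q_n)$-good colourings of $Q_N$ are in bijection and $R(\cLa,Q_n)=R(\cV,Q_n)$. If $P$ is non-trivial it contains an induced copy of $\cV$ or of $\cLa$, and then any $(\cV,Q_n)$-good (resp.\ $(\cLa,Q_n)$-good) colouring is automatically $(P,Q_n)$-good, because a blue copy of $P$ would contain a blue copy of the corresponding three-element poset. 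Hence $R(P,Q_n)\ge R(\cV,Q_n)$, and it remains to prove $R(\cV,Q_n)\ge n+\tfrac{n}{15\log n}$.

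\textbf{The construction.} Write $N=n+t$ with $t=\lfloor n/(15\log n)\rfloor$. It suffices to build $B\subseteq Q_N$ that (i) contains no copy of $\cV$ --- equivalently, the set of elements of $B$ lying above any fixed element of $B$ is a chain, so $B$ is a ``downward forest'' in which each element has at most one upper cover inside $B$ --- and (ii) meets every copy of $Q_n$ in $Q_N$; colouring $B$ blue and the rest red is then $(\cV,Q_n)$-good, so $R(\cV,Q_n)>N$. I would take $B$ to be a downward-branching forest supported on a band of levels near the middle of $Q_N$, obtained by a recursive branching process of depth $\Theta(\log n)$ whose distinct branches use disjoint coordinate blocks and whose successive levels differ by several coordinates; these features ensure no element of $B$ lies below two incomparable elements of $B$, giving (i). For (ii) one would use that every copy of $Q_n$ in $Q_N$ has minimum of size at most $t$ and maximum of size at least $n$, hence a ``fat'' intersection with the band of levels carrying $B$, and then a counting argument comparing the exponential growth of the forest across its $\Theta(\log n)$ levels with the number of elements a $Q_n$-copy is forced to place there. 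The logarithmic depth is exactly what lets $B$ be simultaneously $\cV$-free and a blocker, and it is what caps the number of spare dimensions at $t=\Theta(n/\log n)$. The hard part --- and essentially the entire content of the theorem --- is to design $B$ so that (ii) provably holds for $t$ as large as claimed.
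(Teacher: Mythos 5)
Since the paper only quotes this theorem from \cite{QnV} and gives no proof of it, there is no in-paper argument to compare against; judged on its own terms, your proposal has one outright false step and one essential missing piece. \textbf{Trivial case.} The reduction to $P^{\ast}$ ($m$ incomparable chains of height $h$) and your two deletion lemmas are fine, but the structural claim they are supposed to feed on is false. First, the width-of-$T$ step fails: take $h=m=2$, a single chain $l_1\subset l_2\subset l_3$ and $M$ pairwise incomparable vertices all above $l_3$. Every vertex lies on a $4$-chain, so $T$ is everything and has width $M$, yet there is no copy of $2+2$ because every $2$-chain meets $\{l_1,l_2,l_3\}$ and these are pairwise comparable; your Ramsey extraction cannot produce two incomparable chains because all the long chains share a common tail. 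Worse, the decomposition you want (not merely your route to it) is false: $P=2+2$ is a trivial poset, $(2+2)$-free posets are exactly interval orders, and the staircase interval order on intervals $[i,i+s]$, $1\le i\le L$, with $s\approx\sqrt{L}$, has every chain of size $O(\sqrt{L})$ and every antichain of size $O(\sqrt{L})$; hence any union of $k$ chains and a poset of height $<2h$ covers only $O\big((k+h)\sqrt{L}\big)$ of its $L$ elements, so no decomposition with $k=k(h,m)$ exists. Such a poset can perfectly well occur as the blue subposet of $\QQ([N])$, so the ``decompose and delete'' scheme cannot yield a constant $c(P)$; the actual proof in \cite{QnV} must argue differently.

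\textbf{Non-trivial case.} The reduction $R(P,Q_n)\ge R(\cV,Q_n)$ (containment of $\cV$ or $\cLa$ plus the complementation symmetry $R(\cLa,Q_n)=R(\cV,Q_n)$) is correct, but the inequality $R(\cV,Q_n)\ge n+\frac{n}{15\log n}$ is the entire content of the statement, and for it you offer only a program: a ``recursive branching process of depth $\Theta(\log n)$'' with unspecified parameters, no argument for the blocking property, and no source for the constant $15$ --- you yourself flag this as the hard part. Note also that the blue set must meet \emph{every} copy of $Q_n$, i.e.\ it must work simultaneously for every defining set $\cX$ in the sense of Lemma \ref{lem:embed} and Lemma \ref{basic-properties}(i), not just for one fixed $\cX$; verifying this for a concrete $\cV$-free construction with only $\Theta(n/\log n)$ spare coordinates is precisely where the difficulty (and the proof in \cite{QnV}) lies. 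As it stands, neither half of the statement is established by the proposal.
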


For non-trivial posets $P$, there are only two known approaches to find the upper bound of a tight bound on $R(P,Q_n)$.
The first was introduced by Grosz, Methuku and Tompkins \cite{GMT} for an upper bound on $R(Q_2,Q_n)$ and is based on the following idea.
In a blue/red coloring of the host lattice, there is either a red copy of $Q_n$ (and we are done) or there are many blue chains.
Then using these chains counting arguments can be applied to force a particular monotonically blue structure.

An alternative approach is given in \cite{QnV} by the present authors for proving an upper bound on $R(\cV,Q_n)$.
With a careful analysis of the blue subposet of a hosting lattice with forbidden red $Q_n$ one can obtain much more information than the existence of many chains. 
In this paper we will elaborate on the second approach and formulate the central, intermediate step as a theorem for general $P$. This approach involves so-called {\it blockers}, posets that contain a vertex from each copy of $Q_n$ from a special, easier to analyse subclass. We show in Theorem~\ref{blockers-Ramsey} that extremal properties of $P$-free blockers immediately give an upper bound on $R(P, Q_n)$. Our result on $\cN$-poset then follows:

\begin{theorem}\label{thm_N}\label{thm:N}
$$n+\frac{n}{15\log n}\le R(\cN,Q_n)\le n+\frac{(1+o(1))n}{\log n}.$$
\end{theorem}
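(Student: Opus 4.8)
The lower bound needs no new work: the vertices $B,C,D$ of $\cN$ induce a copy of $\cV$ (with $B$ below the incomparable pair $\{C,D\}$), so $\cN$ is non-trivial and Theorem~\ref{thm:QnV} gives $R(\cN,Q_n)\ge R(\cV,Q_n)\ge n+\frac{n}{15\log n}$. For the upper bound the plan is to run the blocker method. By Theorem~\ref{blockers-Ramsey}, $R(\cN,Q_n)$ is controlled by the largest dimension $N$ for which $Q_N$ admits an $\cN$-free blocker for the subclass of $Q_n$-copies to which that theorem applies; so it suffices to prove the extremal statement that no such blocker exists once $N\ge n+\frac{(1+o(1))n}{\log n}$. (Concretely, in a blue/red coloring of such a $Q_N$ the blue vertices either fail to block, exposing an all-red copy of $Q_n$ from the subclass, or form a blocker, which by the extremal statement cannot be $\cN$-free and hence contains a blue $\cN$.)

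To prove the extremal statement I would first unpack $\cN$-freeness. Since $\cN$ contains both $\cV$ and $\cLa$, an $\cN$-free family $\mathcal B$ need not be $\cV$-free; but forbidding the specific ``N'' still imposes strong structure. Calling a vertex of $\mathcal B$ \emph{up-branching} if it lies below two incomparable vertices of $\mathcal B$ and \emph{down-branching} if it lies above two such vertices, $\cN$-freeness forbids an up-branching vertex and a down-branching vertex in a certain relative position; I would exploit this to cover $\mathcal B$, interval by interval inside the host lattice, by a bounded number of chains --- essentially a ``$\cV$-free-like'' upper part glued to a ``$\cLa$-free-like'' lower part. One then feeds this into the blocking requirement: a copy of $Q_n$ in the subclass is specified by about $N-n$ independent binary choices, so a family that is too thin in the middle layers cannot meet all of them. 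Balancing the chain-cover bound for $\cN$-free families against this count produces the $n/\log n$ term, while a careful (rather than crude) choice of the subclass, together with tight binomial-coefficient estimates, upgrades the constant from the $2+\epsilon$ of the Grosz--Methuku--Tompkins argument to $1+o(1)$.

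The main obstacle is precisely this structural analysis: because $\cN$ is strictly larger than $\cV$, an $\cN$-free family may branch both upward and downward, and controlling the interaction of the two types of branching --- while still showing the family stays thin enough, layer by layer, to lose the blocking race --- is considerably more delicate than the corresponding argument for $\cV$ in \cite{QnV}. Once the right decomposition is in hand, the concluding counting should be routine.
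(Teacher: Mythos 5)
Your lower bound and your reduction via Theorem~\ref{blockers-Ramsey} coincide with the paper, but the heart of the upper bound --- the extremal statement about $\cN$-free blockers --- is exactly the part you defer as ``the main obstacle'', and the route you sketch for it does not work. First, the structural claim that an $\cN$-free family can be covered, interval by interval, by a bounded number of chains is false: any antichain is $\cN$-free (forbidding $\cN$ only restricts configurations that contain comparabilities), so an $\cN$-free blocker may contain an entire middle layer of an interval and admits no bounded chain cover; there is no ``$\cV$-free-like part glued to a $\cLa$-free-like part'' decomposition of this kind. Second, even granting some such decomposition, the counting you propose (``about $N-n$ independent binary choices, so a family too thin in the middle layers loses the blocking race'') only pits quantities of order $2^{k}$, where $k=N-n$, against the width of the host lattice; this is far too weak, since the trivial bounds $|\cF|\ge 2^{k}$ and $|\cF|\le c\binom{N}{\lfloor N/2\rfloor}$ are compatible for every relevant $N$ and yield nothing resembling $k\le (1+o(1))n/\log n$. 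To obtain the stated bound one needs to exhibit inside the blocker a structure of size \emph{superexponential} in $k$.

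What the paper actually does is different in kind. It passes to a \emph{critical} $\cY$-blocker, uses Valdes' characterization of $\cN$-free posets as series-parallel (Theorem~\ref{prop:Nfree}) together with Lemmas~\ref{lem:minimal-connected} and \ref{lem:antichain-no-Y} to show that every critical $\cN$-free blocker has a minimum or maximum (a ``root'', Theorem~\ref{thm:min_vertex}), and then, via Lemma~\ref{lem:reduction}, recursively extracts critical $(\cY-S)$-blockers $\cF_S$ with roots $Z_S$ indexed by ordered subsets $S$ of $\cY$ (Construction~\ref{construction}). A pigeonhole argument on types (Claim 1) and an $\cN$-freeness argument (Claim 2: two comparable roots from suitable ordered sets would force a copy of $\cN$) produce an antichain of size about $k!2^{-k-1}$ inside $\cF$; Sperner's theorem then forces $N\ge k\log k-O(k)$, i.e.\ $k\le(1+o(1))n/\log n$, which combined with Theorem~\ref{blockers-Ramsey} gives the upper bound. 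Your sketch contains neither the root/recursion mechanism nor any substitute capable of producing a factorially large family of pairwise incomparable vertices, and the constant $1+o(1)$ comes from this factorial-versus-Sperner comparison, not from ``tight binomial-coefficient estimates'' on a chain-cover count; so the missing step is not routine.
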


Here, the lower bound follows immediately from Theorem \ref{thm:QnV}, so the focus of this paper is on the upper bound. 

The paper is structured as follows. 
In Section \ref{sec:definitions} main definitions and basic results are given. 
Section \ref{sec:blockers} deals with the main tool used - blockers. 
In Section \ref{sec:N} a proof of Theorem~\ref{thm:N} is given.
\\

\section{Main definitions and tools}\label{sec:definitions}

A vertex $Z$ of a poset $\cF$ is a {\it minimum} of $\cF$ if it is the unique minimal element of $\cF$, i.e.\ $Z\le F$ for every $F\in\cF$. 
Similarly, a \textit{maximum} of $\cF$ is a unique maximal vertex of $\cF$.
Given a fixed poset $P$, a poset $\cF$ is \textit{$P$-free} if it contains no (induced) copy of $P$.
Let $\cX$ and $\cY$ be disjoint sets. For a subposet $\cF\subseteq\QQ(\cY)$, the \textit{$\cX$-shift} of $\cF$ is the poset $\cF'$ with vertices $\{Y\cup\cX : Y\in\cF\}$ ordered by inclusion. Note that $\cF'$ is isomorphic to $\cF$.
\\

Let $\cF_1$ and $\cF_2$ be two disjoint posets. The \textit{parallel composition} of $\cF_1$ and $\cF_2$ is the poset on vertices $\cF_1\cup\cF_2$ such that 
pairs of vertices in $\cF_1$, as well as pairs of vertices in $\cF_2$ are comparable if and only if they are likewise comparable in $\cF_1$ or $\cF_2$, respectively, 
and any two $F_1\in\cF_1$ and $F_2\in\cF_2$ are incomparable. 
In the literature this poset is also referred to as the \textit{independent union} of $\cF_1$ and $\cF_2$.
If for a poset $\cF$ there exists a partition $\cF=\cF_1\cup\cF_2$ into non-empty subposets $\cF_1$ and $\cF_2$ such that $\cF$ is the parallel composition of $\cF_1$ and $\cF_2$, we say that $\cF$ is \textit{disconnected}. Otherwise we say that $\cF$ is \textit{connected}.
\\

A \textit{weak homomorphism} of a poset $\cF_1$ into another poset $\cF_2$ is a function $\phi\colon \cF_1\to \cF_2$ 
such that for any two $A,B\in\cF_1$ with $A\le_{\cF_1} B$, we have $\phi(A)\le_{\cF_2}\phi(B)$. 
Similarly, a function $\phi\colon \cF_1\to \cF_2$ is a \textit{strong homomorphism} if for any $A,B\in\cF_1$, $A\le_{\cF_1} B$ if and only if $\phi(A)\le_{\cF_2}\phi(B)$. 
An injective weak [strong] homomorphism is a \textit{weak [strong] embedding} of $\cF_1$ into $\cF_2$.
Here we exclusively consider \textit{strong} embeddings and \textit{weak} homomorphisms, so we usually simply refer to them as ``embeddings'' and ``homomorphisms'', respectively.
\\

Throughout this paper, we consider a set $\cZ$ as the ground set of our hosting lattice $\QQ(\cZ)$ where $|\cZ|=N$ for some integer $N$.
We then partition $\cZ$ into two disjoint sets $\cX$ and $\cY$, $|\cY|\neq\varnothing$, such that $|\cX|=n$ and $|\cY|=k$ for some integers $n$ and $k$, i.e.\ $N=n+k$.
A (strong) embedding $\psi\colon \QQ(\cX)\to \QQ(\cZ)$ is \textit{$\cX$-good} if for every $X\subseteq\cX$, $\psi(X)\cap \cX=X$.
We say that a copy $Q$ of $Q_n$ in $\QQ(\cZ)$ is \textit{$\cX$-good} if there exists an $\cX$-good embedding of $\QQ(\cX)$ with image $Q$. See Figure \ref{fig:blocker_lattice} (a) for a $\{1,2\}$-good copy of $Q_2$ in $\QQ(\{1,2,x_1,x_2\})$.
Moreover, we say that $\cX$ is a {\it defining set} for a copy of $Q_n$ if this copy is $\cX$-good. One of the main structural observations we have is the following:
\begin{lemma}[Axenovich-Walzer \cite{AW}]\label{lem:embed}
Let $n\in\N$. 
Any copy of $Q_n$ in $\QQ(\cZ)$ is $\cX$-good for some subset $\cX\subseteq\cZ$ with $|\cX|=n$.
\end{lemma}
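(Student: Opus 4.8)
The plan is to convert a copy $Q$ of $Q_n$ in $\QQ(\cZ)$ into an $\cX$-good copy by reading off a correct defining set $\cX$ directly from any embedding realising $Q$. Write the abstract Boolean lattice as $\QQ([n])$ with $[n]=\{1,\dots,n\}$, and fix a strong embedding $\phi\colon\QQ([n])\to\QQ(\cZ)$ with image $Q$ (such $\phi$ exists since $Q$ is, by definition, an induced subposet of $\QQ(\cZ)$ isomorphic to $\QQ([n])$). I would look for $n$ distinct elements $x_1,\dots,x_n\in\cZ$ with the following \emph{witness property}: for every $S\subseteq[n]$ and every $i\in[n]$, one has $x_i\in\phi(S)$ if and only if $i\in S$. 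Granting such witnesses, set $\cX:=\{x_1,\dots,x_n\}$ and define $\psi\colon\QQ(\cX)\to\QQ(\cZ)$ by $\psi(X):=\phi(\{i:x_i\in X\})$; since $X\mapsto\{i:x_i\in X\}$ is an order-isomorphism $\QQ(\cX)\to\QQ([n])$ and $\phi$ is a strong embedding, $\psi$ is a strong embedding with image $Q$, and the witness property gives $\psi(X)\cap\cX=X$ for all $X\subseteq\cX$, so $\psi$ is $\cX$-good and hence $Q$ is $\cX$-good, as required.

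It remains to produce the witnesses. For a fixed $i\in[n]$, I would compare $\phi(\{i\})$ with $\phi([n]\setminus\{i\})$: since $\phi$ is a strong embedding and $\{i\}\not\subseteq[n]\setminus\{i\}$, we get $\phi(\{i\})\not\subseteq\phi([n]\setminus\{i\})$, so there is some $x_i\in\phi(\{i\})\setminus\phi([n]\setminus\{i\})$, and $x_i\in\cZ$. To check the witness property for an arbitrary $S\subseteq[n]$, I would only use monotonicity of $\phi$: if $i\in S$ then $\{i\}\subseteq S$, hence $x_i\in\phi(\{i\})\subseteq\phi(S)$; if $i\notin S$ then $S\subseteq[n]\setminus\{i\}$, hence $\phi(S)\subseteq\phi([n]\setminus\{i\})$, which $x_i$ avoids by construction. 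The same two containments also yield distinctness: if $x_i=x_j$ with $i\neq j$, then $x_j\in\phi(\{j\})\subseteq\phi([n]\setminus\{i\})$ because $j\in[n]\setminus\{i\}$, contradicting $x_i=x_j\notin\phi([n]\setminus\{i\})$. Thus $|\cX|=n$.

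I do not expect a genuine obstacle here; the argument is short and uses nothing beyond the definition of a (strong, induced) embedding together with monotonicity. The only point requiring a little care is that the witness property must hold for \emph{all} subsets $S$, not merely for singletons and their complements --- but this is exactly what the two monotonicity containments above deliver, so no further case analysis is needed. A reader might also want to confirm that $\psi$ has image exactly $Q$ rather than merely contained in it, which is immediate since $X\mapsto\{i:x_i\in X\}$ is a bijection $\QQ(\cX)\to\QQ([n])$ and $\phi$ has image $Q$.
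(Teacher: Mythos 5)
Your proof is correct: choosing $x_i\in\phi(\{i\})\setminus\phi([n]\setminus\{i\})$ and checking the witness property via the two monotonicity containments is exactly the standard argument for this embedding lemma. The paper itself gives no proof here (it cites Axenovich--Walzer), and your argument is essentially the one from that reference, so there is nothing to add.
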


We shall also need some definitions to describe $\cN$-free posets. Let $\cF_1$ and $\cF_2$ be two disjoint posets. 
The \textit{series composition} of $\cF_1$ and $\cF_2$, $\cF_1$ \textit{below} $\cF_2$, is the poset on vertices $\cF_1\cup\cF_2$, where 
pairs of vertices in $\cF_1$, as well as pairs of vertices in $\cF_2$ are comparable if and only if they are likewise comparable in $\cF_1$ or $\cF_2$, respectively, 
and $F_1< F_2$ for any $F_1\in\cF_1$ and $F_2\in\cF_2$. We usually refer to this poset as the \textit{series decomposition of $\cF_1$ below $\cF_2$}. 
A poset is \textit{series-parallel} if either it is a $1$-element poset or it is obtained by series composition or parallel composition of two series-parallel posets. 
Valdes \cite{Valdes} showed the following characterization.

\begin{theorem}[Valdes \cite{Valdes}]\label{prop:Nfree}
A non-empty poset is $\cN$-free if and only if it is series-parallel.
\end{theorem}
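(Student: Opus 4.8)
The plan is to prove both implications; the direction ``series-parallel $\Rightarrow$ $\cN$-free'' is routine, and the direction ``$\cN$-free $\Rightarrow$ series-parallel'' carries the weight. For the first direction I would induct on the series-parallel decomposition of a poset $\cF$. A one-element poset has no four-element subposet, so it contains no $\cN$. If $\cF$ is the parallel composition of $\cN$-free posets $\cF_1$ and $\cF_2$, note that the comparability graph of $\cN$ (edges $AC$, $BC$, $BD$) is connected; since no element of $\cF_1$ is comparable to any element of $\cF_2$, the vertices of any copy of $\cN$ in $\cF$ would all lie in $\cF_1$ or all in $\cF_2$, contradicting $\cN$-freeness. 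Dually, if $\cF$ is the series composition of $\cN$-free posets $\cF_1$ below $\cF_2$, then any two incomparable vertices of $\cF$ lie on the same side; since the \emph{incomparability} graph of $\cN$ (edges $AB$, $AD$, $CD$) is connected too, a copy of $\cN$ would be confined to one side, again a contradiction.

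For the second direction I would use strong induction on $|\cF|$ for an $\cN$-free poset $\cF$. The case $|\cF|=1$ is immediate. For $|\cF|\ge 2$ I split into cases according to the connectivity of the comparability graph $G$ of $\cF$ (edges $=$ comparable pairs) and of its complement $\overline{G}$ (edges $=$ incomparable pairs), recalling that $G$ and $\overline{G}$ cannot both be disconnected. If $G$ is disconnected, then $\cF$ is the parallel composition of two nonempty induced subposets, which are $\cN$-free and hence series-parallel by induction, so $\cF$ is series-parallel. If $\overline{G}$ is disconnected, I would first prove a small lemma valid for every finite poset: if $C_1,\dots,C_m$ are the connected components of its incomparability graph, then for $i\ne j$ every comparability between $C_i$ and $C_j$ points the same way, and the resulting tournament on the components is transitive, hence a linear order. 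Placing the bottom component below the union of the rest then exhibits $\cF$ as a series composition of two nonempty induced subposets, which are $\cN$-free, hence series-parallel by induction, so $\cF$ is series-parallel.

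The only remaining case is that $G$ and $\overline{G}$ are both connected, which forces $|\cF|\ge 4$ (on two or three vertices a graph and its complement cannot both be connected). I would show such an $\cF$ always contains an induced $\cN$, so this case does not occur. The bridge is that induced copies of $\cN$ in $\cF$ correspond exactly to induced paths $P_4$ in $G$: if $w_1w_2w_3w_4$ induce a $P_4$ in $G$ (edges $w_1w_2$, $w_2w_3$, $w_3w_4$ only), then transitivity forces the comparabilities to be oriented either as $w_1<w_2$, $w_3<w_2$, $w_3<w_4$ or as $w_2<w_1$, $w_2<w_3$, $w_4<w_3$, and in either case the non-edges $w_1w_3$, $w_1w_4$, $w_2w_4$ are precisely the incomparable pairs of a copy of $\cN$; conversely the comparability graph of $\cN$ is a $P_4$. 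So it suffices that a graph on at least four vertices with connected complement contains an induced $P_4$ --- equivalently, the cograph dichotomy: a $P_4$-free graph on at least two vertices is disconnected or has disconnected complement. I would either cite this or reprove it by a minimum-counterexample argument (a smallest $P_4$-free graph with both itself and its complement connected has at least five vertices, since on four vertices such a graph must be $P_4$ itself, and one then removes a suitable vertex).

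I expect the main obstacle to be precisely this last step: ruling out, in self-contained fashion, an $\cN$-free poset whose comparability graph and its complement are both connected. The $P_4$-to-$\cN$ transfer is the genuinely order-theoretic part and is short; the cograph dichotomy is standard but, if one insists on proving it from scratch, the vertex-deletion bookkeeping needs care. By contrast, the easy direction, the parallel case, and the series case via the components lemma are all straightforward once the lemmas are in place.
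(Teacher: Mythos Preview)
The paper does not supply its own proof of this theorem; it is quoted as a known result of Valdes and used as a black box in the proof of Theorem~\ref{thm:min_vertex}. So there is no in-paper argument to compare against.

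That said, your proposed proof is correct and is essentially the standard modern route: you exploit that the comparability graph of $\cN$ is exactly $P_4$, so ``$\cN$-free poset'' translates to ``$P_4$-free comparability graph'', and then invoke (or reprove) the cograph dichotomy that every $P_4$-free graph on at least two vertices is disconnected or co-disconnected. Your verification that an induced $P_4$ in a comparability graph always orients to a copy of $\cN$ is right (and $\cN$ is self-dual, so both orientations work). The lemma that the components of the incomparability graph of any finite poset are linearly ordered is also correct and yields the series decomposition cleanly. The only place to be careful, as you note, is if you choose to reprove the cograph dichotomy from scratch; the usual minimum-counterexample argument goes through, but you should check that deleting a vertex from a minimal counterexample preserves connectivity of at least one of $G-v$, $\overline{G}-v$ in a way that lets the induction fire. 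Citing Seinsche's theorem (every $P_4$-free graph on $\ge 2$ vertices has disconnected $G$ or $\overline{G}$) would be entirely acceptable here and keeps the write-up short.
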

%
%

\section{$\cY$-blockers}\label{sec:blockers}

\subsection{Definition and examples of $\cY$-blockers}

{\bf Outline of the main idea.} The definition of $R(P, Q_n)$ implies that there is a coloring of $Q(\cZ)$, $|\cZ| \leq R(P, Q_n)-1$, in blue and red such that the blue vertices ``cover" all copies of $Q_n$, i.e.\ there is a blue vertex in each copy of $Q_n$ and there is no copy of $P$ having only blue vertices, i.e.\ the set of blue vertices is $P$-free.
We shall classify all copies of $Q_n$ according to their defining sets and consider the set of only those blue vertices that ``cover" copies of $Q_n$ with a specific fixed defining set $\cX$. We refer to the poset induced by blue vertices as a \textit{$\cY$-blocker}, where $\cY=\cZ\setminus \cX$. We shall derive several properties of $\cY$-blockers in general and those that are $P$-free and will bound $R(P, Q_n)$ in terms of blockers. This generalises an approach used in \cite{QnV}, where ${\cLa}$-free $\cY$-blockers were considered and called \textit{$\cY$-shrubs}.

\begin{definition}
Let $\cY$ and $\cZ$ be two non-empty sets such that $\cY\subseteq \cZ$. A {\it $\cY$-blocker} in $\QQ(\cZ)$ is a subposet $\FF$ in $\QQ(\cZ)$ 
which contains a vertex from every $\cX$-good copy of $\QQ(\cX)$, where $\cX= \cZ\setminus \cY$.
We say that a $\cY$-blocker $\FF$ in $\QQ(\cZ)$ is \textit{critical} if for any vertex $F\in\cF$ the subposet $\FF\setminus\{F\}$ is not a $\cY$-blocker in $\QQ(\cZ)$.
\end{definition}

Note that for any $\cY\subseteq\cZ$, a $\cY$-blocker in $\QQ(\cZ)$ exists, for example take $\cF=\QQ(\cZ)$. 
Later on we consider ``thinner'' $\cY$-blockers satisfying special properties, in particular being $P$-free. 

\begin{example}
 Let  $\cZ=\{1,2,x_1,x_2\}$, $\cY=\{1,2\}$ and $\cX=\{x_1,x_2\}$. 
 Let $\cF$ be the $\{x_1\}$-shift of $\QQ(\cY)$, see Figure \ref{fig:blocker_lattice} (a).
Consider an arbitrary $\cX$-good copy $Q$ of $\QQ(\cX)$ in $\QQ(\cZ)$, with a corresponding $\cX$-good embedding $\psi\colon \QQ(\cX)\to \QQ(\cZ)$. 
Then $\psi(\{x_1\})=\{x_1\}\cup Y$ for some $Y\subseteq\cY$, and hence $\psi(\{x_1\})\in\cF$. Thus $\cF$ is a $\cY$-blocker in $\QQ(\cX\cup\cY)$.
Figure \ref{fig:blocker_lattice} (b) also depicts a $\cY$-blocker, we shall verify this using Theorem \ref{thm:blocker}.
\end{example}

\begin{figure}[h]
\centering
\includegraphics[scale=0.6]{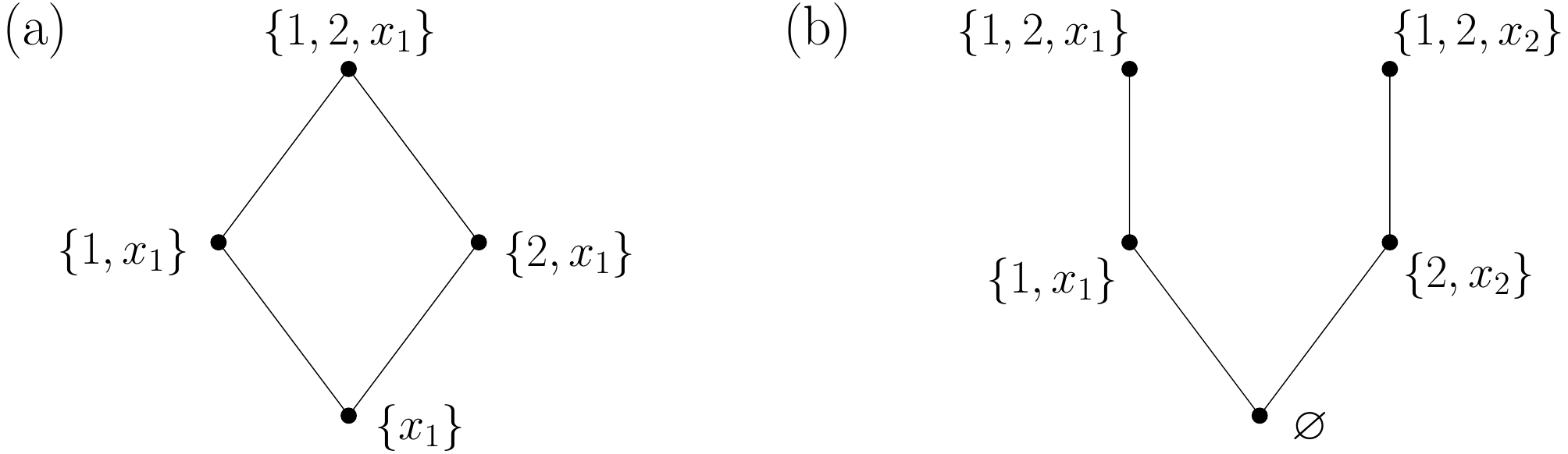}
\caption{ Two $\{1,2\}$-blockers in $\QQ(\{1,2,x_1,x_2\})$ }
\label{fig:blocker_lattice}
\end{figure}
%


\subsection{General properties of $\cY$-blockers}

\begin{lemma}\label{basic-properties}
\begin{itemize}
\item[(i)] A blue/red colored Boolean lattice $\QQ(\cZ)$ contains no red copy of $Q_n$ if and only if for each $\cX\subseteq \cZ$ of size $|\cX|=n$, there is a $\cZ\setminus \cX$-blocker with all vertices blue.
\item[(ii)] Let $\cF$ be a $\cY$-blocker where $\cY \neq \varnothing$ and let $Y\subseteq\cY$. Then there is a vertex $Z\in\cF$ with $Z\cap\cY=Y$. 
In particular, if $Z$ is a minimum of $\cF$, then $Z\cap\cY=\varnothing$; and if $Z$ is a maximum of $\cF$, then $Z\cap\cY=\cY$.
\item[(iii)] If $\cF$ is a $\cY$-blocker, then $|\cF|\geq 2^{|\cY|}$.
\end{itemize}
\end{lemma}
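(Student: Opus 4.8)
The plan is to treat the three items in order, with (iii) an immediate corollary of (ii).

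For (i), I would make the blocker explicit: fix $\cX\subseteq\cZ$ with $|\cX|=n$ and let $\FF$ be the set of all blue vertices of $\QQ(\cZ)$. If $\QQ(\cZ)$ has no red copy of $Q_n$, then every $\cX$-good copy of $\QQ(\cX)$, being in particular a copy of $Q_n$, contains a blue vertex, so $\FF$ is a $\cZ\setminus\cX$-blocker all of whose vertices are blue. Conversely, if some copy $Q$ of $Q_n$ were entirely red, then by Lemma~\ref{lem:embed} it is $\cX$-good for some $\cX$ with $|\cX|=n$, and the hypothesised all-blue $\cZ\setminus\cX$-blocker would contain a vertex of $Q$, which is absurd since that vertex would be simultaneously blue and red.

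For (ii), the key object is the $Y$-shift of $\QQ(\cX)$ with $\cX=\cZ\setminus\cY$: the map $X\mapsto X\cup Y$ is a strong embedding of $\QQ(\cX)$ into $\QQ(\cZ)$, and it is $\cX$-good since $(X\cup Y)\cap\cX=X$ because $Y\subseteq\cY$ is disjoint from $\cX$. Hence its image $Q=\{X\cup Y:X\subseteq\cX\}$ is an $\cX$-good copy of $Q_n$, and every vertex $V$ of $Q$ satisfies $V\cap\cY=Y$. Since $\FF$ is a $\cY$-blocker, it contains some vertex $Z\in Q$, and this $Z$ has $Z\cap\cY=Y$. For the two special cases, apply this with $Y=\varnothing$ to obtain $Z'\in\FF$ with $Z'\cap\cY=\varnothing$: if $Z$ is the minimum of $\FF$ then $Z\subseteq Z'$, so $Z\cap\cY=\varnothing$; dually, with $Y=\cY$ one gets $Z''\in\FF$ with $\cY\subseteq Z''$, and if $Z$ is the maximum of $\FF$ then $Z''\subseteq Z$, forcing $Z\cap\cY=\cY$.

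Finally, (iii) follows because (ii) yields, for each of the $2^{|\cY|}$ subsets $Y\subseteq\cY$, a vertex $Z_Y\in\FF$ with $Z_Y\cap\cY=Y$; distinct choices of $Y$ give distinct vertices $Z_Y$, so $|\FF|\ge 2^{|\cY|}$. I do not expect a genuine obstacle here; the only points needing a little care are checking that the $Y$-shift is a \emph{strong} (not merely weak) embedding and that it satisfies the precise definition of $\cX$-goodness, both of which reduce to the disjointness of $\cX$ and $\cY$. Everything else is bookkeeping.
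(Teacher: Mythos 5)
Your proposal is correct and follows essentially the same route as the paper: part (i) via Lemma~\ref{lem:embed}, part (ii) via the $Y$-shift of $\QQ(\cZ\setminus\cY)$ being an $\cX$-good copy missed by any blocker lacking a vertex with $\cY$-part $Y$, and part (iii) by counting the $2^{|\cY|}$ distinct $\cY$-parts. You merely spell out details (strong embedding, $\cX$-goodness, distinctness of the $Z_Y$) that the paper leaves implicit, so there is nothing to correct.
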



\begin{proof}
Part $(i)$ follows immediately from Lemma \ref{lem:embed} and the definition of a $\cY$-blocker.
For $(ii)$, let $\cF$ be a $\cY$-blocker in $\QQ(\cZ)$ and $\cX=\cZ\setminus \cY$. Observe that $\cF$ contains a vertex $U$ with $U\cap\cY=Y$ for every $Y\subseteq \cY$,
because otherwise the $\{Y\}$-shift of $\QQ(\cX)$ is an $\cX$-good copy of $\QQ(\cX)$ that does not contain a vertex from $\cF$.
Considering $Y=\varnothing$, we have that there is $U\in \cF$ such that $U\cap \cY= \varnothing$. 
Then a minimum $Z$ of $\cF$ has $\cY$-part $Z\cap \cY \subseteq U\cap \cY= \varnothing$. Similarly, a maximum $Z$ of $\cF$ has $\cY$-part $Z\cap\cY=\cY$.
For $(iii)$, since there are $2^{|\cY|}$ subsets of $\cY$, part $(ii)$ immediately implies that $|\cF|\ge 2^{|\cY|}$.
\end{proof} 


\begin{theorem}\label{thm:mPk}\label{blockers-Ramsey}
Let $P$ be a poset and let $n\in\N$ be an integer.
Then $$R(P,Q_n)\le \min \{N: ~\text{there is no }  \text{P-free} ~  \text{\cY\!-blocker in }\QQ([N])\text{ for some } \cY\subseteq [N], |\cY|=N-n\}.$$
\end{theorem}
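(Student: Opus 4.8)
The plan is to unwind the definition of $R(P,Q_n)$ and contrapose against the $\cY$-blocker condition. Let $N$ be any integer such that for \emph{every} $\cY\subseteq[N]$ with $|\cY|=N-n$ there is no $P$-free $\cY$-blocker in $\QQ([N])$; it suffices to show $R(P,Q_n)\le N$, i.e.\ that every blue/red coloring of $\QQ([N])$ contains either a blue copy of $P$ or a red copy of $Q_n$. So fix such a coloring $c$ and suppose, for contradiction, that there is no red copy of $Q_n$.

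The key step is to exhibit a $P$-free $\cY$-blocker for some suitable $\cY$, contradicting the choice of $N$. By Lemma~\ref{basic-properties}(i), since there is no red copy of $Q_n$, for each $\cX\subseteq[N]$ with $|\cX|=n$ there is a $([N]\setminus\cX)$-blocker all of whose vertices are blue; equivalently, fixing any one such $\cX$ and setting $\cY=[N]\setminus\cX$ (note $|\cY|=N-n$ as required), the set $\FF$ of \emph{all} blue vertices of $\QQ([N])$ that lie in some $\cX$-good copy of $\QQ(\cX)$ — or even more simply, just the poset of all blue vertices restricted appropriately — contains a $\cY$-blocker with all vertices blue. Concretely, take $\FF$ to be the subposet of $\QQ([N])$ induced on the blue vertices: by Lemma~\ref{basic-properties}(i) it contains a $\cY$-blocker (a $\cY$-blocker is by definition \emph{any} subposet meeting every $\cX$-good copy, so the full blue subposet qualifies, being a superset of the blue blocker guaranteed by (i)). Now, if $\FF$ contained an induced copy of $P$, that copy would be a blue copy of $P$ in $\QQ([N])$, contrary to assumption; hence $\FF$ is $P$-free. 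Thus $\FF$ is a $P$-free $\cY$-blocker in $\QQ([N])$ for this particular $\cY$ with $|\cY|=N-n$, contradicting the defining property of $N$.

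Therefore no such coloring exists: every blue/red coloring of $\QQ([N])$ has a blue $P$ or a red $Q_n$, so $R(P,Q_n)\le N$. Taking the minimum over all valid $N$ gives the claimed inequality.

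The only subtlety — and the step I would state most carefully — is the direction of inclusion in the blocker argument: we need that the \emph{whole} blue subposet being $P$-free is what we use, while the \emph{existence} of a blue blocker inside it is what Lemma~\ref{basic-properties}(i) provides; these fit together precisely because ``$\cY$-blocker'' is an upward-closed property (any subposet containing a $\cY$-blocker is itself a $\cY$-blocker, since it still meets every $\cX$-good copy), whereas ``$P$-free'' is downward-closed. There is no real obstacle here beyond bookkeeping; one should just make sure $\cX$ (equivalently $\cY$) is chosen once and held fixed throughout, and that the size constraint $|\cY|=N-n$ matches the quantifier in the statement.
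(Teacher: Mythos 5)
Your core argument is the same as the paper's: if a coloring of $\QQ([N])$ has no red copy of $Q_n$, then the blue subposet meets every $\cX$-good copy of $Q_n$ (Lemma~\ref{lem:embed}, Lemma~\ref{basic-properties}(i)), hence is a $\cY$-blocker for $\cY=[N]\setminus\cX$; if it is also $P$-free this contradicts the choice of $N$, and otherwise there is a blue copy of $P$. Two points need repair, though. First, you negated the wrong quantifier: the minimum in the statement (and in the paper's proof) runs over those $N$ for which \emph{some} $\cY$ of size $N-n$ admits no $P$-free $\cY$-blocker, whereas you assume this for \emph{every} such $\cY$, so as written you only bound $R(P,Q_n)$ by the a priori larger minimum. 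The fix is immediate---take $\cY$ to be the witnessing set from the definition of $N$ and set $\cX=[N]\setminus\cY$ (your ``fix any one such $\cX$'' already leaves room for this), or observe that permutations of the ground set show the two quantifications define the same set of $N$---but one of these remarks has to be made.

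Second, and more substantively, you never show that the set over which the minimum is taken is non-empty, i.e.\ that some finite $N$ admits no $P$-free $\cY$-blocker with $|\cY|=N-n$; the paper treats this as an essential half of the proof. It follows by playing two size bounds against each other: every $\cY$-blocker has at least $2^{|\cY|}=2^{N-n}$ vertices by Lemma~\ref{basic-properties}(iii), while every $P$-free subposet of $\QQ([N])$ has at most $c(P)\binom{N}{\lfloor N/2\rfloor}=O\bigl(2^{N}/\sqrt{N}\bigr)$ vertices by the Methuku--P\'alv\"olgyi theorem \cite{MP}, and these are incompatible once $N$ is large relative to $n$. Without some such argument your inequality is established only conditionally (or vacuously, if one reads $\min\varnothing=\infty$), so this is a genuinely missing ingredient rather than bookkeeping. (A minor stylistic point: your contradiction hypothesis should include ``no blue copy of $P$'' alongside ``no red copy of $Q_n$'', or the argument should be phrased directly as in the paper.)
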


\begin{proof}
Let $N$ be the smallest integer such that for some $\cY\subseteq[N]$, $|\cY|=N-n$, there is no $P$-free $\cY$-blocker in $\QQ([N])$.

Consider an arbitrarily blue/red colored Boolean lattice $\QQ([N])$ and let $\BB$ be the induced subposet of $\QQ([N])$ consisting of all blue vertices.
We shall show that there is either a blue copy of $P$ or a red copy of $Q_n$ in the coloring of $\QQ([N])$.
Let $\cX=[N]\setminus\cY$. 
If in $\QQ([N])$ there is a monochromatic red copy of $Q_n$ which is $\cX$-good, the proof is complete.
Otherwise each $\cX$-good copy of $Q_n$ contains a blue vertex, i.e.\ the blue subposet $\BB$ is a $\cY$-blocker.
By the definition of $N$, $\BB$ is not $P$-free. Thus there is a blue copy of $P$ in $\QQ([N])$.
\\

It remains to show that this minimum is well-defined, i.e.\ we shall find an integer $N$ such that there is no $P$-free $\cY$-blocker in $\QQ([N])$, 
where $\cY\subseteq [N]$ with $|\cY|=N-n$. 
In order to show this, we bound the size $|\cF|$ of a $P$-free $\cY$-blocker $\cF$ in $\QQ([N])$  from above and from below.
On the one hand, by a result of Methuku and P\'alv\"olgyi \cite{MP} we find that the size of the $P$-free subposet $\cF\subseteq\QQ([N])$ is bounded by
 $$|\cF|\le c(P)\binom{N}{N/2}\le \frac{c'(P)\cdot 2^{N}}{ \sqrt{N/2}},$$
 where $c$ and $c'$ are constants depending only on $P$.
On the other hand, Lemma \ref{basic-properties} provides that $$|\cF|\ge 2^{|\cY|}=2^{N-n}.$$
For sufficiently large $N$, we have that $\frac{\sqrt{N/2}}{c'(P)}> 2^{n}$, which implies that there is no $P$-free $\cY$-blocker $|\cF|$ in $\QQ([N])$.
\end{proof}

\begin{definition}
For a subposet $\FF$ of $\QQ(\cZ)$ and $\cY\subseteq \cZ$, we say that a (weak) homomorphism $\phi\colon \FF\to \QQ(\cY)$ is \textit{$\cY$-hitting} if there exists some $F\in\cF$ with $\phi(F)= F\cap\cY$.
Conversely, $\phi$ is \textit{$\cY$-avoiding} if $\phi(F)\neq F\cap\cY$ for every $F\in\FF$.
\end{definition}

\noindent\textbf{Remark. } 
In the following we show an equivalence between the existence of a $\cY$-blocker and the existence of  a $\cY$-avoiding homomorphism.
One can think of the homomorphism $\phi\colon \FF \to\QQ(\cY)$ as a ``recipe'' encoding an embedding function $\psi$ which corresponds to an $\cX$-good copy of $Q_n$ in $\QQ(\cX\cup\cY)$.
Recall that a $\cY$-blocker is defined as a poset which has a vertex in common with every $\cX$-good copy of $Q_n$, therefore every ``recipe'' $\phi$ forces a ``collision'' with $\FF$ in a vertex $F\in\FF$ with $\phi(F)=F\cap \cY$. 
However, there is no $1$-to-$1$ correspondence between functions $\phi$ and $\psi$, 
and the presented constructions building $\psi$ from $\phi$ as well as $\phi$ from $\psi$ are not inverse of each other.

\begin{theorem}\label{thm:blocker}
Let $ \cY$ be a non-empty subset of a set $\cZ$.  A subposet $\FF$ of a Boolean lattice $\QQ(\cZ)$ is a $\cY$-blocker if and only if every (weak) homomorphism $\phi\colon \FF \to\QQ(\cY)$ is $\cY$-hitting. 
\end{theorem}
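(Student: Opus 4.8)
The plan is to prove both directions by contraposition, exploiting the duality between a homomorphism $\phi\colon\FF\to\QQ(\cY)$ and an $\cX$-good copy of $\QQ(\cX)$, where $\cX=\cZ\setminus\cY$. For the forward direction, suppose $\FF$ is a $\cY$-blocker but some homomorphism $\phi\colon\FF\to\QQ(\cY)$ is $\cY$-avoiding, i.e.\ $\phi(F)\neq F\cap\cY$ for all $F\in\FF$. I want to build from $\phi$ an $\cX$-good embedding $\psi\colon\QQ(\cX)\to\QQ(\cZ)$ whose image avoids $\FF$ entirely, contradicting the blocker property. The natural attempt is $\psi(X)=X\cup g(X)$ for some monotone $g\colon\QQ(\cX)\to\QQ(\cY)$ chosen so that $X\cup g(X)$ is never equal to any $F\in\FF$; then for each $F\in\FF$ we need $g(F\cap\cX)\neq F\cap\cY$ whenever $F\cap\cX = X$. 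Here is where $\phi$ enters: we should define $g(X)$ by aggregating the values $\phi(F)$ over vertices $F$ with $F\cap\cX=X$ (or more cleverly over an up-set/down-set), using the fact that $\phi$ is a weak homomorphism so that unions/intersections of the $\phi(F)$ behave monotonically, and using $\cY$-avoidance to guarantee $g(X)\neq F\cap\cY$ for the relevant $F$. Checking that the resulting $g$ is monotone (hence $\psi$ is an embedding) and $\cX$-good is the bookkeeping part.

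For the reverse direction, suppose $\FF$ is not a $\cY$-blocker: there is an $\cX$-good copy $Q$ of $\QQ(\cX)$ containing no vertex of $\FF$, realized by an $\cX$-good embedding $\psi$, so $\psi(X)=X\cup h(X)$ with $h\colon\QQ(\cX)\to\QQ(\cY)$ monotone and $X\cup h(X)\notin\FF$ for every $X\subseteq\cX$. From this I construct a homomorphism $\phi\colon\FF\to\QQ(\cY)$ that is $\cY$-avoiding: the candidate is $\phi(F)=h(F\cap\cX)$. This is a weak homomorphism because $F\subseteq F'$ implies $F\cap\cX\subseteq F'\cap\cX$ and $h$ is monotone. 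And it is $\cY$-avoiding: if $\phi(F)=F\cap\cY$ for some $F\in\FF$, then $F = (F\cap\cX)\cup(F\cap\cY) = (F\cap\cX)\cup h(F\cap\cX) = \psi(F\cap\cX)\in Q$, contradicting that $Q$ avoids $\FF$. This direction should be essentially clean.

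The main obstacle is the forward direction: turning an arbitrary weak homomorphism $\phi$ into a genuinely \emph{monotone} map $g$ with the avoidance property, since $\phi$ only respects order one-directionally and its values need not be nested along chains in a way that directly yields a copy of $\QQ(\cX)$. The key idea I expect to need is to pass to a canonical monotone modification of $\phi$ — for instance, $g(X)=\bigcap\{\phi(F): F\in\FF,\ F\cap\cX\supseteq X\}$ or the dual union over $F\cap\cX\subseteq X$ — and then argue that weak-monotonicity of $\phi$ forces $g(X)=\phi(F)$ on the nose for the critical vertex $F$ with $F\cap\cX=X$ (if such $F$ lies at the top or bottom of its fiber), so that $\cY$-avoidance of $\phi$ transfers to $g$. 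One must also handle fibers $\{F\in\FF: F\cap\cX=X\}$ that are empty or have several incomparable elements; I would treat $X$ with empty fiber freely (any monotone choice works since there is no constraint from $\FF$ at that $X$) and, for nonempty fibers, reduce to comparing against each $F$ individually. Getting these definitions to simultaneously yield monotonicity and avoidance, while staying consistent across all of $\QQ(\cX)$, is the delicate step; once the right formula for $g$ is found, the verification is routine.
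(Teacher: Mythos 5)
Your reverse direction is fine and coincides with the paper's easy direction: from an $\cX$-good embedding $\psi(X)=X\cup h(X)$ whose image avoids $\FF$, the map $\phi(F)=h(F\cap\cX)$ is a weak homomorphism, and $\phi(F)=F\cap\cY$ would force $F=\psi(F\cap\cX)$, so $\phi$ is $\cY$-avoiding. The genuine gap is in the hard direction, and it is not mere bookkeeping: both one-shot formulas you propose fail the avoidance property. Take $g(X)=\bigcup\{\phi(Z): Z\in\FF,\ Z\cap\cX\subseteq X\}$ (the intersection variant fails symmetrically). If $X\cup g(X)$ equals some $F\in\FF$, then $\phi(F)\subseteq g(X)=F\cap\cY$, and $\cY$-avoidance makes this containment strict, so some $a\in g(X)\setminus\phi(F)$ comes from a vertex $Z\in\FF$ with $Z\cap\cX\subseteq X$ and $a\in\phi(Z)$. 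To reach a contradiction you would need $\phi(Z)\subseteq\phi(F)$, i.e.\ $Z\subseteq F$; but $Z\cap\cX\subseteq X$ says nothing about $Z\cap\cY$, so $Z$ need not lie below $F$ and no contradiction follows. For the same reason the hope that ``the critical $F$ at the top or bottom of its fiber satisfies $g(X)=\phi(F)$ on the nose'' cannot be realized: the vertices contributing to $g(X)$ live in other fibers and are in general incomparable to $F$.

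The missing idea is that the aggregation set must be defined in terms of the candidate image vertex itself, not in terms of $\cX$-fibers, which makes the definition self-referential and forces a fixed-point construction. The paper sets $f_0(X)=\varnothing$, $\cF_i(X)=\{Z\in\FF:\ Z\subseteq X\cup f_{i-1}(X)\}$, $f_i(X)=\bigcup_{Z\in\cF_i(X)}\phi(Z)$, iterates until $f_{j}(X)=f_{j+1}(X)$, and puts $\psi(X)=X\cup f_{j(X)}(X)$. Monotonicity of $X\mapsto f_i(X)$ (hence of $\psi$) follows by induction on $i$, and the fixed-point property is exactly what repairs the argument above: every $Z$ contributing an element to $f_{j(X)}(X)$ satisfies $Z\subseteq\psi(X)$, so if $\psi(X)\in\FF$ the homomorphism property gives $\phi(Z)\subseteq\phi(\psi(X))$, while $\cY$-avoidance at $\psi(X)$ produces an element of $f_{j(X)}(X)\setminus\phi(\psi(X))$ coming from such a $Z$ --- a contradiction. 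So the ``right formula'' is a least-fixed-point (closure) construction; no single-pass aggregation over $\{Z: Z\cap\cX\subseteq X\}$ or $\{Z: Z\cap\cX\supseteq X\}$ can work, and without this ingredient your forward direction does not go through.
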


\begin{example} Let  $\cZ=\{1,2,x_1,x_2\}$ and $\cY=\{1,2\}$. 
In the Boolean lattice $\QQ(\{1,2,x_1,x_2\})$ consider the subposet $\cF$ on vertices $\varnothing, \{1,x_1\}, \{1,2,x_1\}, \{2,x_2\}, \{1,2,x_2\}$, see Figure \ref{fig:blocker_lattice} (b). 
Then $\cF$ is a $\{1,2\}$-blocker. 
In order to prove this, we can use Theorem \ref{thm:blocker}. Assume towards a contradiction that there is a homomorphism $\phi\colon \FF \to\QQ(\cY)$ such that for every $F\in\FF$ we have $\phi(F)\neq F\cap \cY$.
Then $\phi(\varnothing)\cap\cY\neq \varnothing$, say without loss of generality $1\in \phi(\varnothing)\cap\cY$.
Since $\phi$ is a homomorphism $\phi(\varnothing)\subseteq \phi(\{1,x_1\})$, so $1\in \phi(\{1,x_1\})\cap\cY$.
Now, because $\phi(\{1,x_1\})\cap\cY\neq \{1\}$, we obtain that $\phi(\{1,x_1\})\cap\cY= \{1,2\}$. 
Then using that $\phi(\{1,x_1\})\subseteq \phi(\{1,2,x_1\})$, we obtain $\phi(\{1,2,x_1\})\cap\cY= \{1,2\}$, a contradiction.
\end{example}

\begin{proof}[Proof of Theorem \ref{thm:blocker}]
Let $\cY\subseteq\cZ$ be a non-empty subset and let $\cX=\cZ\setminus\cY$.
For the first part of the proof let $\cF$ be a subposet in $\QQ(\cZ)$ such that every homomorphism $\phi\colon \FF\to \QQ(\cY)$ is $\cY$-hitting.
We shall show that $\cF$ is a $\cY$-blocker.
Let $Q$ be an arbitrary $\cX$-good copy of $\QQ(\cX)$ in $\QQ(\cZ)$ with a corresponding $\cX$-good embedding $\psi\colon \QQ(\cX)\to \QQ(\cZ)$.
Consider the function $\phi\colon \FF\to \QQ(\cY)$ given by $\phi(F):=\psi(F\cap\cX)\cap\cY$ for each $F\in\cF$.
Using the properties of $\psi$, it is easy to see that if $F\subseteq F'$ for $F,F'\in \FF$, then $\phi(F)\subseteq \phi(F')$, so $\phi$ is a homomorphism.
Thus $\phi$ is $\cY$-hitting and we find some $Z\in\cF$ with $\phi(Z)=Z\cap\cY$.
Then $\psi(Z\cap\cX)\cap\cY=\phi(Z)=Z\cap\cY$. Since $\psi$ is $\cX$-good, we know that $\psi(Z\cap\cX)\cap\cX=Z\cap\cX$. Therefore $\psi(Z\cap\cX)=Z$.
Since the image of $\psi$ is $Q$, we obtain $Z=\psi(Z\cap\cX)\in Q$, thus $\cF$ and $Q$ have the vertex $Z$ in common.\\

From now on let $\cF$ be a subposet in $\QQ(\cZ)$ for which there exists a $\cY$-avoiding homomorphism $\phi\colon \FF\to \QQ(\cY)$. 
We shall show that $\cF$ is not a $\cY$-blocker. For that we shall construct an $\cX$-good embedding $\psi\colon \QQ(\cX)\to \QQ(\cZ)$ such that the image of $\psi$ does not contain a vertex from $\cF$.
Fix some $X\in\QQ(\cX)$, now we define $\psi(X)$ using an iteration: 
Informally spoken, in step $i$ we introduce a set $f_i(X)\subseteq\cY$ and check whether $X\cup f_i(X)$ is a ``feasible'' choice for $\psi(X)$; and if not, we extend $f_i(X)$ to its strict superset $f_{i+1}(X)$ and repeat.

Let $f_0(X)=\varnothing$. For $i\in\N$, let $\cF_i(X)=\{Z\in\cF : Z\subseteq X\cup f_{i-1}(X)\}$ be the down-set of $X\cup f_{i-1}(X)$ and let $f_i(X)=\bigcup _{Z\in\cF_i(X)} \phi(Z)$.
Clearly $f_i(X)\subseteq \cY$. Note that $\varnothing=f_0(X)\subseteq f_1(X)$, thus $\cF_1(X)\subseteq \cF_2(X)$ and so $f_{1}(X)\subseteq f_2(X)$.
Iteratively, we obtain that $\cF_{i}(X)\subseteq \cF_{i+1}(X)$ and $f_{i}(X)\subseteq f_{i+1}(X)\subseteq\cY$, see Figure \ref{fig:Yhitting} (a).
Thus after finitely many steps $f_j(X)=f_{j+1}(X)$ for some $j\in\N$, i.e.\ this set is ``feasible'', and let $j(X)$ be the minimal such index $j$. 
Observe that $f_{j(X)}(X)=f_{j(X)+1}(X)=f_{j(X)+2}(X)=\dots$ as $\cF_{j(X)+1}(X)=\cF_{j(X)+2}(X)=\dots$. We set $\psi(X):=X\cup f_{j(X)}(X).$\\

\begin{figure}[h]
\centering
\includegraphics[scale=0.6]{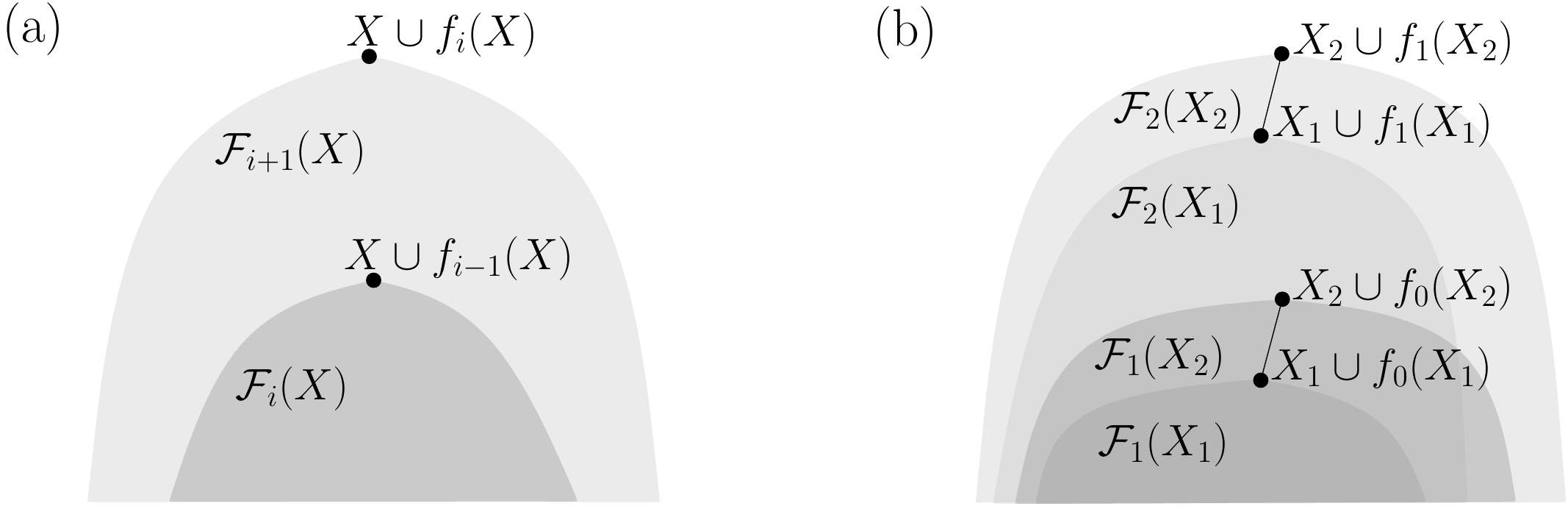}
\caption{(a) Construction of $\psi(X)$, (b) Iteration in Claim 1.}
\label{fig:Yhitting}
\end{figure}

\noindent\textbf{Claim 1.} $\psi$ is an $\cX$-good embedding of $\QQ(\cX)$.\\
\textit{Proof of Claim 1.} 
Note that for every $X\in\QQ(\cX)$, we have $f_{j(X)}(X)\subseteq \cY$ and so $\psi(X)\cap\cX=X$. 
Thus it remains to show that $\psi$ is an embedding in order to prove the claim.
Let $X_1,X_2\in\QQ(\cX)$. We shall show that $X_1\subseteq X_2$ if and only if $\psi(X_1)\subseteq\psi(X_2)$.

First suppose that $X_1\subseteq X_2$. Then $X_1\cup f_0(X)=X_1\subseteq X_2=X_2\cup f_0(X)$, so $\cF_1(X_1)\subseteq \cF_1(X_2)$. 
This implies that $f_1(X_1)\subseteq f_1(X_2)$, so $X_1\cup f_1(X)\subseteq X_2\cup f_1(X)$, see Figure \ref{fig:Yhitting} (b).
Iteratively, $\cF_i(X_1)\subseteq \cF_i(X_2)$ and $f_i(X_1)\subseteq f_i(X_2)$.
We obtain that $$f_{j(X_1)}(X_1)=f_{\max\{j(X_1),j(X_2)\}}(X_1)\subseteq f_{\max\{j(X_1),j(X_2)\}}(X_2)=f_{j(X_2)}(X_2),$$ thus $\psi(X_1)\subseteq\psi(X_2)$.
Now suppose that $\psi(X_1)\subseteq\psi(X_2)$. Then in particular $X_1=\psi(X_1)\cap\cX\subseteq \psi(X_2)\cap\cX=X_2$, so $X_1\subseteq X_2$.
\\

\noindent\textbf{Claim 2.} The image of $\psi$ contains no vertex from $\cF$.\\
\textit{Proof of Claim 2.} 
Let $X\in\cX$ and assume that $\psi(X)\in\cF$. 
We shall find a contradiction by considering $\phi(\psi(X))$.
Observe that $\psi(X)=X\cup f_{j(X)}(X)\in \cF_{j(X)+1}(X)$ and so $\phi(\psi(X))\subseteq f_{j(X)+1}(X)$.
Since $\phi$ is $\cY$-avoiding, $\phi(\psi(X))\neq \psi(X)\cap \cY=f_{j(X)}(X)=f_{j(X)+1}(X)$. 
Consequently, there exists an element $a\in f_{j(X)+1}(X)\setminus \phi(\psi(X))$.
By definition of $f_i(X)$, we find a vertex $Z\in \cF_{j(X)}(X)\subseteq \cF$ with $a\in \phi(Z)$.
Now, since $Z\in \cF_{j(X)}(X)$, we obtain that $Z\subseteq X\cup f_{j(X)}(X)=\psi(X)$ whereas element $a$ witnesses $\phi(Z)\not\subseteq \phi(\psi(X))$.
This contradicts the fact that $\phi$ is a homomorphism.
So, indeed, $\cF$ is not a $\cY$-blocker.
\\

This concludes the proof of Theorem \ref{thm:blocker}.
\end{proof}

In the following we use the characterization from Theorem \ref{thm:blocker} to analyse properties of critical blockers. 
Recall that for $\cY\subseteq\cZ$, a $\cY$-blocker $\FF$ in $\QQ(\cZ)$ is \textit{critical} if for any vertex $F\in\cF$ the subposet $\FF\setminus\{F\}$ is not a $\cY$-blocker in $\QQ(\cZ)$.

\begin{lemma}\label{lem:minimal-connected} 
Let $\cF$ be a critical $\cY$-blocker for a non-empty set $\cY$. Then $\cF$ is a connected poset. 
\end{lemma}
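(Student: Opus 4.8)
The plan is to prove the contrapositive, or rather to argue directly that if $\cF$ is a $\cY$-blocker that is disconnected, then $\cF$ is not critical, i.e.\ some vertex can be removed while keeping the blocker property. So suppose $\cF = \cF_1 \cup \cF_2$ is the parallel composition of two non-empty posets $\cF_1, \cF_2$. I want to show that either $\cF_1$ or $\cF_2$ is already a $\cY$-blocker on its own, which immediately contradicts criticality (pick any vertex in the other part to delete).

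The natural tool is the homomorphism characterization from Theorem~\ref{thm:blocker}: $\cF_i$ is a $\cY$-blocker if and only if every weak homomorphism $\phi\colon \cF_i \to \QQ(\cY)$ is $\cY$-hitting. So suppose for contradiction that neither $\cF_1$ nor $\cF_2$ is a $\cY$-blocker. Then there is a $\cY$-avoiding homomorphism $\phi_1\colon \cF_1 \to \QQ(\cY)$ and a $\cY$-avoiding homomorphism $\phi_2\colon \cF_2 \to \QQ(\cY)$. The key step is to glue these into a single homomorphism $\phi\colon \cF \to \QQ(\cY)$ that is still $\cY$-avoiding, which would contradict the assumption that $\cF$ itself is a $\cY$-blocker. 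The obvious candidate is $\phi := \phi_1$ on $\cF_1$ and $\phi := \phi_2$ on $\cF_2$. Since $\cF$ is the parallel composition, there are no comparabilities between $\cF_1$ and $\cF_2$, so the only comparabilities to check for the homomorphism condition are those internal to $\cF_1$ (handled by $\phi_1$ being a homomorphism) and those internal to $\cF_2$ (handled by $\phi_2$). Hence $\phi$ is a weak homomorphism. And $\phi$ is $\cY$-avoiding because $\phi(F) = \phi_i(F) \ne F\cap\cY$ for every $F$ in $\cF_i$, $i\in\{1,2\}$. By Theorem~\ref{thm:blocker} this means $\cF$ is not a $\cY$-blocker, a contradiction.

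Therefore one of $\cF_1, \cF_2$, say $\cF_1$, is a $\cY$-blocker in $\QQ(\cZ)$. Since $\cF_2$ is non-empty, pick $F \in \cF_2$; then $\cF_1 \subseteq \cF \setminus \{F\}$, so $\cF \setminus \{F\}$ is also a $\cY$-blocker (any superposet of a $\cY$-blocker inside $\QQ(\cZ)$ is a $\cY$-blocker, directly from the definition — it still contains a vertex of every $\cX$-good copy). This contradicts the criticality of $\cF$. Hence a critical $\cY$-blocker cannot be disconnected, i.e.\ it is connected.

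I do not expect any serious obstacle here; the only point requiring a line of care is the verification that the glued map $\phi$ is a weak homomorphism, which hinges precisely on the defining property of parallel composition (no cross-comparabilities), and the small observation that enlarging a $\cY$-blocker keeps it a $\cY$-blocker. One should also note the degenerate-looking case where $\cF_i$ might be a single vertex: a $\cY$-avoiding homomorphism still exists there as long as $\cY \ne \varnothing$ (map the vertex to anything other than its $\cY$-part, which is possible since $|\QQ(\cY)| \ge 2$), so the argument goes through; but in fact we don't even need to produce such maps by hand — we only assumed, for contradiction, that they exist.
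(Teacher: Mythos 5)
Your proposal is correct and follows essentially the same route as the paper: the paper likewise assumes a parallel decomposition $\cF=\cF_1\cup\cF_2$, notes (via criticality, implicitly using your observation that any superposet of a blocker is a blocker) that neither part can be a $\cY$-blocker, and glues the resulting $\cY$-avoiding homomorphisms into one on $\cF$, contradicting Theorem~\ref{thm:blocker}. The only difference is presentational: you run the criticality step at the end rather than at the start.
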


\begin{proof}
Assume that $\cF$ is the parallel composition of two non-empty posets $\cF_1$ and $\cF_2$, i.e.\ $\cF_1$ and $\cF_2$ are vertex-wise incomparable in $\cF$.
Then each of $\cF_1$ and $\cF_2$ is not a $\cY$-blocker by criticality of $\cF$.
Thus there are $\cY$-avoiding homomorphisms $\phi_1\colon \cF_1\to\QQ(\cY)$ and $\phi_2\colon \cF_2\to\QQ(\cY)$. 
Now the function $\psi:\cF\to\QQ(\cY)$, $$\psi(F)=\begin{cases}\phi_1(F)\text{, if }F\in\cF_1\\ \phi_2(F)\text{, if }F\in\cF_2.\end{cases}$$
is a homomorphism of $\cF$ and $\cY$-avoiding. Recall that $\cF$ is a $\cY$-blocker, so this is a contradiction to Theorem \ref{thm:blocker}.
\end{proof}

\begin{lemma} \label{lem:chain-no-Y} Let $\cF$ be a critical $\cY$-blocker for a non-empty set $\cY$. Let $U_1,U_2\in\cF$ with $U_1\neq U_2$.
If either $U_1\cap\cY=\varnothing=U_2\cap\cY$ or $U_1\cap\cY=\cY=U_2\cap\cY$, then $U_1$ and $U_2$ are not comparable.
\end{lemma}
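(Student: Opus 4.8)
The plan is to argue by contradiction via the homomorphism characterisation of $\cY$-blockers in Theorem~\ref{thm:blocker}: I will show that if two distinct comparable vertices $U_1,U_2$ of $\cF$ both have $\cY$-part $\varnothing$ (or both have $\cY$-part $\cY$), then one can build a $\cY$-avoiding homomorphism $\cF\to\QQ(\cY)$, contradicting that $\cF$ is a blocker. By relabelling (the hypothesis is symmetric in $U_1$ and $U_2$) I may assume $U_1\subsetneq U_2$. I will treat the case $U_1\cap\cY=\varnothing=U_2\cap\cY$ in detail and then reduce the case $U_1\cap\cY=\cY=U_2\cap\cY$ to it by a complementation symmetry.

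For the first case: by criticality of $\cF$, the subposet $\cF':=\cF\setminus\{U_2\}$ is not a $\cY$-blocker, so by Theorem~\ref{thm:blocker} there is a $\cY$-avoiding homomorphism $\phi\colon\cF'\to\QQ(\cY)$. I extend $\phi$ to all of $\cF$ by defining
$$\phi(U_2):=\bigcup\{\phi(F) : F\in\cF',\ F\subsetneq U_2\}.$$
Then I check: (i) this is still a (weak) homomorphism — on pairs not involving $U_2$ it is the original $\phi$; if $F\subsetneq U_2$ then $\phi(F)\subseteq\phi(U_2)$ by construction; and if $U_2\subsetneq F'$ then every $F\subsetneq U_2$ in $\cF'$ satisfies $F\subsetneq F'$, so $\phi(F)\subseteq\phi(F')$, whence $\phi(U_2)\subseteq\phi(F')$; (ii) the extension remains $\cY$-avoiding, the only new point being $\phi(U_2)\neq U_2\cap\cY=\varnothing$, which holds because $U_1\in\cF'$, $U_1\subsetneq U_2$, and $\phi$ being $\cY$-avoiding with $U_1\cap\cY=\varnothing$ forces $\phi(U_1)\neq\varnothing$, so $\phi(U_2)\supseteq\phi(U_1)\neq\varnothing$. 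This contradicts Theorem~\ref{thm:blocker}.

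For the case $U_1\cap\cY=\cY=U_2\cap\cY$: I will use that the complementation map $X\mapsto\cZ\setminus X$ is an anti-automorphism of $\QQ(\cZ)$ sending $\cX$-good embeddings to $\cX$-good embeddings (if $\psi$ is $\cX$-good, then $X\mapsto\cZ\setminus\psi(\cX\setminus X)$ is an $\cX$-good embedding whose image is the complement of the image of $\psi$). Hence $\cF$ is a critical $\cY$-blocker if and only if $\cF^{c}:=\{\cZ\setminus F : F\in\cF\}$ is, while $U\cap\cY=\cY$ translates to $(\cZ\setminus U)\cap\cY=\varnothing$ and comparability is preserved; so this case follows from the first case applied to $\cZ\setminus U_1,\cZ\setminus U_2\in\cF^{c}$. (Alternatively one repeats the first-case argument with ``up'' and ``down'' swapped: remove $U_1$, set $\phi(U_1):=\bigcap\{\phi(F) : F\in\cF\setminus\{U_1\},\ F\supsetneq U_1\}$, and use $\phi(U_2)\subsetneq\cY$ to get $\phi(U_1)\subsetneq\cY=U_1\cap\cY$.)

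The one genuinely delicate step is (ii): showing the newly defined value is $\cY$-avoiding. This is exactly where both hypotheses $U_1\cap\cY=\varnothing$ and $U_2\cap\cY=\varnothing$ (i.e.\ that $U_1$ and $U_2$ sit at the ``same end'') are used — they guarantee $\phi(U_1)$ is non-empty and hence that the union defining $\phi(U_2)$ is non-empty, which is precisely what separates $\phi(U_2)$ from $U_2\cap\cY$. Everything else is routine bookkeeping: the extension automatically lands in $\QQ(\cY)$ and is wedged correctly between the down-set and the up-set of the removed vertex.
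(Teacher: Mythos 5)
Your proof is correct and follows essentially the same route as the paper: by criticality you delete $U_2$, take a $\cY$-avoiding homomorphism of $\cF\setminus\{U_2\}$ guaranteed by Theorem~\ref{thm:blocker}, extend it to $U_2$ by the union over the strict down-set, and use $\phi(U_1)\neq U_1\cap\cY=\varnothing$ to keep the extension $\cY$-avoiding, exactly as in the paper's argument. The only (harmless) difference is that the paper handles the case $U_1\cap\cY=\cY=U_2\cap\cY$ by the explicit symmetric intersection construction, which you relegate to a parenthetical remark, while your primary reduction via the complementation anti-automorphism $X\mapsto\cZ\setminus X$ is also valid since it preserves $\cX$-good copies, blockers, and criticality.
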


\begin{proof}
Assume that $U_1\cap\cY=\varnothing=U_2\cap\cY$ and $U_1\subseteq U_2$. 
As $\cF$ is a critical $\cY$-blocker, the poset $\cF'=\cF\setminus\{U_2\}$ is not a $\cY$-blocker, so by Theorem \ref{thm:blocker} we find a $\cY$-avoiding homomorphism $\phi:\cF'\to \QQ(\cY)$. Let $\cU=\{U\in\cF'\colon U\subseteq U_2,\ U\neq U_2\}$, note that $\cU\neq\varnothing$, see Figure \ref{fig:root} (a).
We extend $\phi$ to a function $\psi\colon \cF\to \QQ(\cY)$ by defining 
$$\psi(F)=\begin{cases}\phi(F),& \text{ if }F\neq U_2\\ \bigcup_{U\in\cU}\phi(U),\quad &\text{ if }F= U_2.\end{cases}$$
In order to reach a contradiction, it remains to show that $\psi$ is a $\cY$-avoiding homomorphism.
We shall show that $\psi$ is a homomorphism by considering any two $F_1, F_2\in \cF$ such that $F_1\subseteq F_2$ and verifying that $\psi(F_1)\subseteq \psi(F_2)$. We need to consider cases whether either of $F_1$ or $F_2$ is equal to $U_2$. We repeatedly use the fact that $\phi$ is a homomorphism:

$\circ$ If $F_1\neq U_2$ and $F_2\neq U_2$, then $\psi(F_1)=\phi(F_1)\subseteq \phi(F_2)=\psi(F_2)$.

$\circ$ If  $F_1=U_2$, then $\psi(F_1)=\psi(U_2) = \bigcup_{U\in \cU} \phi(U) \subseteq \bigcup_{U\in \cU} \phi(F_2) \subseteq\phi(F_2)= \psi(F_2)$. Here we used the property that for any $U\in \cU$, $U\subseteq U_2\subseteq F_2$. 

$\circ$ If $F_2=U_2$, then $\psi(F_1) =\phi(F_1) \subseteq \bigcup_{U\in \cU} \phi(U)  = \psi(U_2)=\psi(F_2)$. 
Here, we used that $F_1 \in \cU$ and thus $F_1\subseteq \bigcup_{U\in \cU} U$.
Therefore, $\psi$ is a homomorphism.\\

To show that $\psi$ is $\cY$-avoiding, we need to show that for any $F\in \cF$, $\psi(F)\neq F\cap \cY$. 
 Consider first $F\in \cF$ with $F\neq U_2$, i.e.\ $F\in \cF'$. 
Since $\phi$ is $\cY$-avoiding,  $\phi(F)\neq F\cap\cY$. Since $\psi(F)=\phi(F)$, we have that $\psi(F) \neq F\cap\cY$. 
Now, let $F=U_2$. Since $\psi(U_2)\supseteq \phi(U_1)$ where $\phi(U_1)\neq U_1\cap \cY=\varnothing$, we find that $\psi(U_2)\neq\varnothing=U_2\cap\cY$.
We conclude that $\psi$ is $\cY$-avoiding. This contradicts Theorem \ref{thm:blocker} and the fact that $\cF$ is a $\cY$-blocker. 

Under the assumption that $U_1\cap\cY=\cY=U_2\cap\cY$ and $U_1\subseteq U_2$, a symmetric proof holds for $\cU=\{U\in\cF\setminus\{U_1\}\colon U\supseteq U_1,\ U\neq U_1\}$ and $\psi\colon \cF\to \QQ(\cY)$ with 
$$\psi(F)=\begin{cases}\phi(F),& \text{ if }F\neq U_1\\ \bigcap_{U\in\cU}\phi(U),\quad &\text{ if }F= U_1.\end{cases}$$
\end{proof}

\begin{figure}[h]
\centering
\includegraphics[scale=0.6]{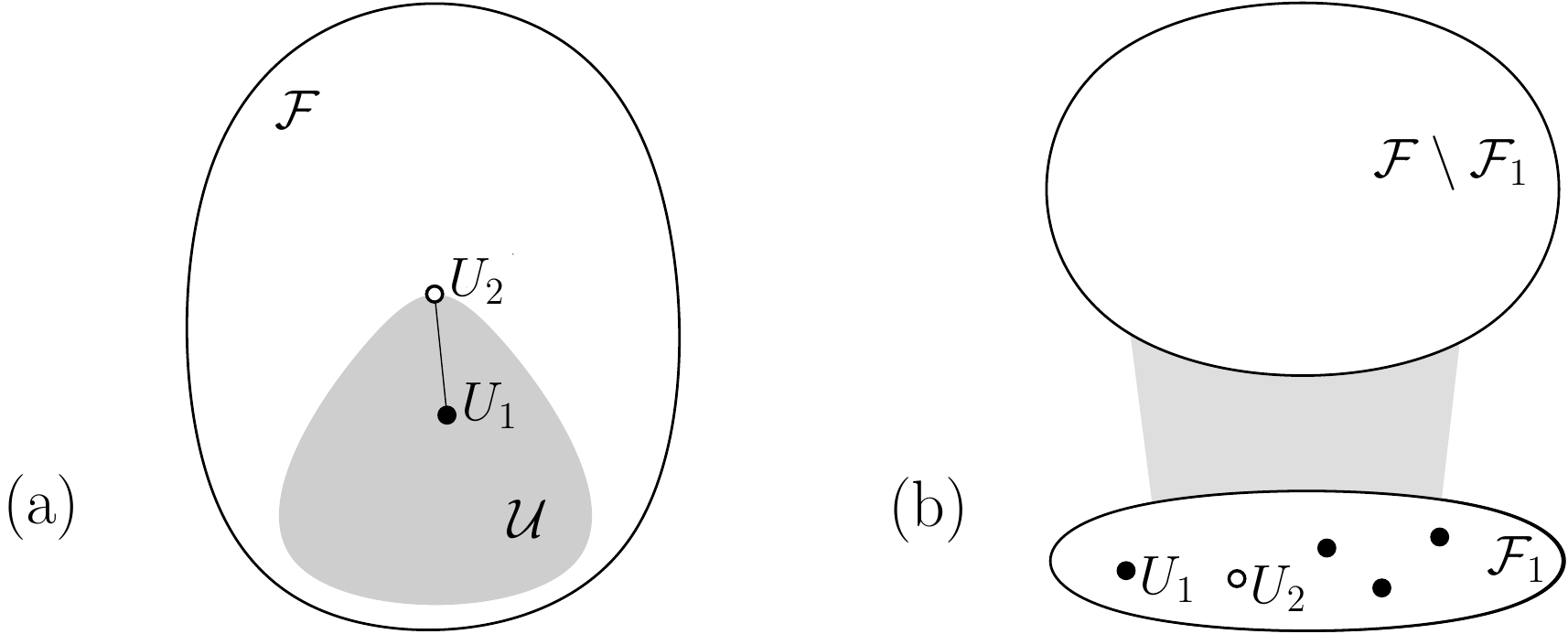}
\caption{(a) Setting in Lemma \ref{lem:chain-no-Y}, (b) Setting in Lemma \ref{lem:antichain-no-Y}.}
\label{fig:root}
\end{figure}

\begin{lemma}\label{lem:antichain-no-Y}
Let $\cF$ be a critical $\cY$-blocker where $\cY \neq \varnothing$. 
Let $\cF_1\subseteq \{U\in \cF: U\cap \cY=\varnothing\}$ such that $\cF$ is a series composition of $\cF_1$ below $\cF\setminus \cF_1$, then $|\cF_1|\le1$.
Similarly, let $\cF_2\subseteq\{U\in \cF: U\cap \cY=\cY\}$ such that $\cF$ is a series composition of $\cF\setminus \cF_2$ below $\cF_2$, then $|\cF_2|\le1$.
%
\end{lemma}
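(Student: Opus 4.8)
The plan is to follow the template of Lemma~\ref{lem:chain-no-Y}: assume the bound fails, use criticality of $\cF$ to delete a single vertex and obtain a $\cY$-avoiding homomorphism there, then extend it to all of $\cF$ and invoke Theorem~\ref{thm:blocker} for a contradiction.

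So suppose $|\cF_1|\ge 2$ and fix distinct $U_1,U_2\in\cF_1$. First I would collect the structural facts about $U_1$. Since $U_1\cap\cY=\varnothing=U_2\cap\cY$, Lemma~\ref{lem:chain-no-Y} forces any two distinct vertices of $\cF_1$ to be incomparable, so $\cF_1$ is an antichain; combined with the series decomposition (every vertex of $\cF_1$ lies strictly below every vertex of $\cF\setminus\cF_1$) this shows that $U_1$ is a minimal vertex of $\cF$ and that the vertices strictly above $U_1$ are exactly those of $\cF\setminus\cF_1$. In particular $U_2<F$ for every $F\in\cF\setminus\cF_1$.

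Next, by criticality $\cF\setminus\{U_1\}$ is not a $\cY$-blocker, so Theorem~\ref{thm:blocker} supplies a $\cY$-avoiding homomorphism $\phi\colon\cF\setminus\{U_1\}\to\QQ(\cY)$. I would extend it to $\psi\colon\cF\to\QQ(\cY)$ by $\psi(F)=\phi(F)$ for $F\ne U_1$ and $\psi(U_1)=\phi(U_2)$, and then verify the two required properties. That $\psi$ is a weak homomorphism splits into cases on whether a relation $F_1\subseteq F_2$ in $\cF$ involves $U_1$: if neither equals $U_1$ it is inherited from $\phi$; the case $F_2=U_1$ with $F_1\subsetneq U_1$ cannot occur since $U_1$ is minimal; and if $F_1=U_1\subsetneq F_2$ then $F_2\in\cF\setminus\cF_1$, so $U_2<F_2$ and hence $\psi(U_1)=\phi(U_2)\subseteq\phi(F_2)=\psi(F_2)$. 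That $\psi$ is $\cY$-avoiding holds at every $F\ne U_1$ because $\phi$ is, and at $U_1$ because $\psi(U_1)=\phi(U_2)\ne U_2\cap\cY=\varnothing=U_1\cap\cY$, again since $\phi$ is $\cY$-avoiding. Thus $\psi$ is a $\cY$-avoiding homomorphism of $\cF$, contradicting that $\cF$ is a $\cY$-blocker, so $|\cF_1|\le 1$. The bound $|\cF_2|\le 1$ follows by the order-dual argument: for distinct $V_1,V_2\in\cF_2$ the vertex $V_1$ is maximal in $\cF$, one takes a $\cY$-avoiding homomorphism $\phi$ on $\cF\setminus\{V_1\}$ and sets $\psi(V_1)=\phi(V_2)$, using $V_2\cap\cY=\cY$ to check $\cY$-avoidance at $V_1$.

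The step I expect to need the most care is the choice of the value $\psi(U_1)$: it must be contained in $\phi(F)$ for every $F$ lying above $U_1$ — hence in $\bigcap_{F\in\cF\setminus\cF_1}\phi(F)$ — yet it must be nonempty because $U_1\cap\cY=\varnothing$. The role of the \emph{second} vertex $U_2\in\cF_1$ is exactly that $\phi(U_2)$ meets both demands at once: it is nonempty since $\phi$ is $\cY$-avoiding at $U_2$, and it sits inside every $\phi(F)$ with $F\in\cF\setminus\cF_1$ since $U_2$ lies below all such $F$. One should also note the degenerate possibility $\cF\setminus\cF_1=\varnothing$, in which case the homomorphism condition at $U_1$ is vacuous and the same $\psi$ still works (and this case is in any event excluded by Lemma~\ref{basic-properties}(ii), since $\cF$ must contain a vertex whose $\cY$-part equals $\cY\ne\varnothing$).
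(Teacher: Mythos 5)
Your proof is correct and follows essentially the same route as the paper's: use Lemma \ref{lem:chain-no-Y} to see $\cF_1$ is an antichain, delete one of the two vertices by criticality, obtain a $\cY$-avoiding homomorphism via Theorem \ref{thm:blocker}, and extend it by assigning the deleted vertex the image of the other vertex of $\cF_1$ (the paper merely swaps the roles of $U_1$ and $U_2$), yielding the same contradiction; the dual argument for $\cF_2$ is likewise identical.
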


\begin{proof}
For the first part, assume towards a contradiction that there are two distinct vertices $U_1,U_2\in\cF_1$. 
 Since $\cF$ is a critical $\cY$-blocker, there is a $\cY$-avoiding homomorphism $\phi\colon \cF\setminus\{U_2\}\to\QQ(\cY)$.
Let $\psi\colon \cF\to \QQ(\cY)$ such that
$$\psi(F)=\begin{cases}\phi(F),\quad &\text{ if }F\neq U_2\\ \phi(U_1),& \text{ if }F= U_2.\end{cases}$$
We shall prove that $\psi$ is a $\cY$-avoiding homomorphism of $\cF$. 
By Lemma \ref{lem:chain-no-Y}, $\cF_1$ is an antichain. 
In order to show that $\psi$ is a homomorphism, we consider two arbitrary $F_1, F_2\in \cF$ with $F_1\subseteq F_2$ and show that $\psi(F_1)\subseteq \psi(F_2)$.

$\circ$ If $F_2=U_2$, then in particular $F_2\in\cF_1$ and so $F_1\in\cF_1$, because $\cF$ is a series composition of $\cF_1$ below $\cF\setminus \cF_1$.
Since $\cF_1$ is an antichain, we obtain that $F_1=U_2=F_2$. Then trivially $\psi(F_1)=\psi(U_2)= \psi(F_2)$.

$\circ$ If $F_1=U_2$ and $F_2\neq U_2$, we know that $F_2\in\cF\setminus\cF_1$ because $\cF_1$ is an antichain. 
Then $U_1\subseteq F_2$. Because $\phi$ is a homomorphism and by definition of $\psi$, we obtain that $\psi(U_2)=\phi(U_1)\subseteq \phi(F_2)$.

$\circ$ If $F_1\neq U_2$ and $F_2\neq U_2$, then $\psi(F_1)=\phi(F_1)\subseteq \phi(F_2)=\psi(F_2)$.
Thus $\psi$ is a homomorphism of $\cF$.\\

For every $F\in\cF\setminus\{U_2\}$, we know that $\psi(F)=\phi(F)\neq F\cap\cY$. Furthermore, $\psi(U_2)=\phi(U_1)\neq U_1\cap \cY=\varnothing=U_2\cap\cY$. 
Thus $\psi$ is $\cY$-avoiding, a contradiction.

If we assume that there are distinct $U_1,U_2\in\cF_2$, a symmetric argument considering the same function $\psi\colon \cF\to \QQ(\cY)$, 
$$\psi(F)=\begin{cases}\phi(F),\quad &\text{ if }F\neq U_2\\ \phi(U_1),& \text{ if }F= U_2.\end{cases}$$
yields a contradiction.
\end{proof}
%
%
%

\begin{lemma}\label{lem:chain_blocker}
Let $\cX$ and $\cY$ be two disjoint sets with $|\cY|=1$. Let $\cF$ be a critical $\cY$-blocker in $\QQ(\cX\cup\cY)$.
Then $\cF$ is a chain consisting of two vertices $X_1, X_2\cup\cY$, where $X_1\subseteq X_2\subseteq\cX$.
\end{lemma}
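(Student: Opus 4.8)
The plan is to combine the structural lemmas already established with the special feature $|\cY|=1$. Write $\cY=\{y\}$, so there are only two possible $\cY$-parts for a vertex of $\cF$: either $U\cap\cY=\varnothing$ or $U\cap\cY=\cY$. By Lemma \ref{basic-properties}(ii), $\cF$ contains at least one vertex with each $\cY$-part, and its minimum (if one exists) has empty $\cY$-part while its maximum (if one exists) has $\cY$-part equal to $\cY$. First I would show $\cF$ has a minimum and a maximum. By Lemma \ref{lem:minimal-connected}, $\cF$ is connected. Let $\cF^\varnothing=\{U\in\cF:U\cap\cY=\varnothing\}$ and $\cF^\cY=\{U\in\cF:U\cap\cY=\cY\}$, both non-empty, partitioning $\cF$. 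Every $U\in\cF^\varnothing$ and every $U'\in\cF^\cY$ satisfy $U\subsetneq U'$ or $U\parallel U'$ (since $U'\not\subseteq U$), so no vertex of $\cF^\cY$ lies below a vertex of $\cF^\varnothing$. If additionally every vertex of $\cF^\varnothing$ were below every vertex of $\cF^\cY$, then $\cF$ would be the series composition of $\cF^\varnothing$ below $\cF^\cY$, and Lemma \ref{lem:antichain-no-Y} would force $|\cF^\varnothing|\le 1$ and $|\cF^\cY|\le 1$, giving exactly the claimed two-vertex chain. So the real work is to rule out the case that some $U\in\cF^\varnothing$ is incomparable to some $U'\in\cF^\cY$.

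To handle that, I would argue that such incomparabilities cannot occur in a \emph{critical} blocker with $|\cY|=1$, by constructing a $\cY$-avoiding homomorphism and invoking Theorem \ref{thm:blocker}. The natural candidate homomorphism $\phi\colon\cF\to\QQ(\cY)$ is the ``constant-ish'' map that tries to send each $U\in\cF^\varnothing$ to $\cY$ (which differs from $U\cap\cY=\varnothing$) and each $U'\in\cF^\cY$ to $\varnothing$ (which differs from $U'\cap\cY=\cY$); this is $\cY$-avoiding by construction. The only way this fails to be a homomorphism is if there is a comparability $U\subsetneq U'$ with $U\in\cF^\varnothing$ and $U'\in\cF^\cY$ — then we would need $\phi(U)=\cY\subseteq\varnothing=\phi(U')$, which is false. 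So $\phi$ is a valid $\cY$-avoiding homomorphism precisely when $\cF^\varnothing$ lies entirely below no vertex of $\cF^\cY$ — i.e.\ when there are \emph{no} comparabilities between the two parts. But connectedness of $\cF$ (Lemma \ref{lem:minimal-connected}) forbids the complete absence of such comparabilities, so at least one comparability $U\subsetneq U'$ exists. Thus the crude $\phi$ fails, and I need a more careful homomorphism that exploits criticality: delete a well-chosen vertex, use Theorem \ref{thm:blocker} on the smaller poset to get a $\cY$-avoiding $\phi$ there, and extend it.

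Concretely, I expect to pick a vertex to delete as follows. Suppose for contradiction there exist $U\in\cF^\varnothing$ and $U'\in\cF^\cY$ with $U\parallel U'$. Pick a maximal vertex $M$ of $\cF$ lying in $\cF^\varnothing$ (such $M$ exists: take any maximal vertex above $U$ within $\cF^\varnothing$; note $M$ is not above $U'$ since $M\cap\cY=\varnothing$). By criticality, $\cF\setminus\{M\}$ is not a $\cY$-blocker, so there is a $\cY$-avoiding homomorphism $\phi'\colon\cF\setminus\{M\}\to\QQ(\cY)$. Extend $\phi'$ to $\phi$ on $\cF$ by setting $\phi(M)=\cY$. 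This is still $\cY$-avoiding since $\cY\neq\varnothing=M\cap\cY$. For the homomorphism property at $M$: any $F_1\subsetneq M$ gives $\phi(F_1)=\phi'(F_1)\subseteq\cY=\phi(M)$ automatically; any $F_2\supsetneq M$ must satisfy $F_2\cap\cY=\cY$ (since $M\cap\cY=\varnothing$ and $M\subsetneq F_2$ with $|\cY|=1$ forces... wait, $M\subsetneq F_2$ does not force $y\in F_2$) — here is the subtlety: if $M\subsetneq F_2$ with $F_2\in\cF^\varnothing$, then $\phi(M)=\cY$ must be contained in $\phi(F_2)=\phi'(F_2)$, which need not hold. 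This is exactly where maximality of $M$ in $\cF^\varnothing$ is needed: every $F_2\supsetneq M$ lies in $\cF^\cY$, but then $\phi'(F_2)=\cY$ need not hold either. \textbf{The main obstacle} is precisely this extension step: choosing which vertex to delete and which value to assign so that the extended map stays a homomorphism; I anticipate the correct choice is to delete a maximal element of $\cF^\varnothing$ (equivalently, after first reducing to the case $\cF^\varnothing$ is an antichain via Lemma \ref{lem:chain-no-Y}) and to redefine $\phi$ on that vertex as $\bigcap_{F\supsetneq M}\phi'(F)$ — mirroring the trick in Lemma \ref{lem:chain-no-Y} — checking that this intersection still differs from $\varnothing$, which follows because $M\parallel U'$ provides a witness vertex not above $M$, keeping the intersection from being forced empty. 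Once the contradiction is reached, $\cF^\varnothing$ lies entirely below $\cF^\cY$, the series-composition hypothesis of Lemma \ref{lem:antichain-no-Y} applies on both sides, and we conclude $|\cF^\varnothing|=|\cF^\cY|=1$, so $\cF=\{X_1,\,X_2\cup\cY\}$ with $X_1\subseteq X_2\subseteq\cX$ (the inclusion $X_1\subseteq X_2$ being forced because these two vertices must be comparable, again by connectedness).
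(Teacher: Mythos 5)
There is a genuine gap, and it sits exactly where you flagged it: ruling out an incomparable cross pair $U\in\cF^\varnothing$, $U'\in\cF^\cY$. Your proposed repair cannot work as described. If $M$ is a maximal element of $\cF^\varnothing$, then every $F\in\cF$ with $F\supsetneq M$ lies in $\cF^\cY$, and since $|\cY|=1$ any $\cY$-avoiding homomorphism $\phi'$ is forced to satisfy $\phi'(F)=\varnothing$ for all such $F$ (the only value different from $F\cap\cY=\cY$). Hence $\bigcap_{F\supsetneq M}\phi'(F)=\varnothing=M\cap\cY$ whenever some vertex lies strictly above $M$, so the extended map is not $\cY$-avoiding; and the witness $U'\parallel M$ is irrelevant, because a vertex not above $M$ simply does not appear in that intersection. (In the remaining case, where nothing of $\cF$ lies above $M$, setting $\phi(M)=\cY$ does give a $\cY$-avoiding homomorphism of all of $\cF$ and contradicts $\cF$ being a blocker -- but that only tells you maximal elements of $\cF$ lie in $\cF^\cY$, not that all cross pairs are comparable.) So the series-composition structure you need before invoking Lemma \ref{lem:antichain-no-Y} is never established, and the argument does not close.

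The irony is that you already hold the winning card in your first paragraph. With $|\cY|=1$ a $\cY$-avoiding map has no freedom at all: it must send every vertex of $\cF^\varnothing$ to $\cY$ and every vertex of $\cF^\cY$ to $\varnothing$. Connectedness (Lemma \ref{lem:minimal-connected}) gives one comparable cross pair $X_1\subseteq X_2\cup\cY$ with $X_1,X_2\subseteq\cX$; but then the two-element subposet $\cF'=\{X_1,X_2\cup\cY\}$ is already a $\cY$-blocker by Theorem \ref{thm:blocker}, since the only candidate avoiding map would need $\phi(X_1)=\cY\subseteq\phi(X_2\cup\cY)=\varnothing$, which is absurd. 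Criticality then finishes immediately: if $\cF$ contained any further vertex $F$, then $\cF\setminus\{F\}\supseteq\cF'$ would still be a $\cY$-blocker, contradicting criticality, so $\cF=\{X_1,X_2\cup\cY\}$. This is the paper's route; it needs neither control of all cross pairs nor Lemma \ref{lem:antichain-no-Y}.
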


\begin{proof}Since $|\cY|=1$, we find that for every $Z\in\cF$ either $Z\cap\cY=\varnothing$ or $Z\cap\cY=\cY$.
Consider subposets  $\cF_1 = \{Z\in \cF: ~Z\cap\cY=\varnothing\}$ and $\cF_2 = \{Z\in \cF: ~Z\cap\cY=\cY\}$ partitioning $\cF$.
Lemma \ref{basic-properties} provides that $\cF_1\neq \varnothing$ and $\cF_2\neq \varnothing$. 
By Lemma \ref{lem:minimal-connected}, $\cF$ is connected, so in particular there are two vertices from $\cF_1$ and from $\cF_2$ which are comparable.
Let these vertices be $X_1\in\cF_1$ and $X_2\cup\cY \in\cF_2$, where $X_1,X_2\subseteq \cX$. Then $X_1\subseteq X_2\cup\cY$, so $X_1\subseteq X_2$.
\\

Next we need to show that $\cF=\{X_1,X_2\cup\cY\}$.
Consider the subposet $\cF'=\{X_1,X_2\cup\cY\}\subseteq\cF$. We show that $\cF'$ is a $\cY$-blocker in $\QQ(\cX\cup\cY)$,
i.e.\ by Theorem \ref{thm:blocker} we shall show that there is no $\cY$-avoiding homomorphism from $\cF'$ to $\QQ(\cY)$.
A homomorphism $\phi:\cF'\to \QQ(\cY)$ is $\cY$-avoiding only if $\phi(X_1)=\cY$ and $\phi(X_2\cup\cY)=\varnothing$, 
but such a homomorphism does not exist, since $\phi(X_1)\subseteq\phi(X_2\cup\cY)$ because of $X_1\subseteq X_2\cup\cY$.
We obtain that $\cF'$ is a $\cY$-blocker, therefore $\cF=\{X_1,X_2\cup\cY\}$ since $\cF$ is critical.
\end{proof}

\begin{lemma}\label{lem:reduction}
Let $\cY$ be a set of size at least $2$ and let $a\in\cY$. Let $\cF$ be a $\cY$-blocker. 
Then the induced subposets $\{F\in\cF\colon a\in F\}$ and $\{F\in\cF\colon a\notin F\}$ are $(\cY\setminus\{a\})$-blockers.
\end{lemma}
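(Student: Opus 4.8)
The plan is to deduce each part from the defining property of the $\cY$-blocker $\cF$ by transferring an $\cX$-good copy of $\QQ(\cX)$ living in a smaller ground set to an $\cX$-good copy in $\QQ(\cZ)$, to which the hypothesis applies. Write $\cX=\cZ\setminus\cY$ and $\cY'=\cY\setminus\{a\}$; note $\cY'\neq\varnothing$ since $|\cY|\ge 2$, so ``$\cY'$-blocker'' is meaningful. In $\QQ(\cZ\setminus\{a\})$ the complement of $\cY'$ is exactly $\cX$, which is why the target ground set to aim for is $\cZ\setminus\{a\}$.

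For $\cF_{\bar a}:=\{F\in\cF\colon a\notin F\}$, viewed as a subposet of $\QQ(\cZ\setminus\{a\})$: take an arbitrary $\cX$-good embedding $\psi\colon\QQ(\cX)\to\QQ(\cZ\setminus\{a\})$. Since $\QQ(\cZ\setminus\{a\})$ is a subposet of $\QQ(\cZ)$, the very same map $\psi$ is an $\cX$-good embedding into $\QQ(\cZ)$, so its image is an $\cX$-good copy of $\QQ(\cX)$ in $\QQ(\cZ)$. As $\cF$ is a $\cY$-blocker, some $F\in\cF$ lies in that image; but every vertex of the image is a subset of $\cZ\setminus\{a\}$, hence $a\notin F$, i.e.\ $F\in\cF_{\bar a}$. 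Thus $\cF_{\bar a}$ meets every $\cX$-good copy of $\QQ(\cX)$ in $\QQ(\cZ\setminus\{a\})$, so it is a $\cY'$-blocker there.

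For $\cF_{a}:=\{F\in\cF\colon a\in F\}$, use the isomorphism $F\mapsto F\setminus\{a\}$ and set $\cF'_a:=\{F\setminus\{a\}\colon F\in\cF_a\}\subseteq\QQ(\cZ\setminus\{a\})$. Given an $\cX$-good embedding $\psi\colon\QQ(\cX)\to\QQ(\cZ\setminus\{a\})$, define $\widetilde\psi\colon\QQ(\cX)\to\QQ(\cZ)$ by $\widetilde\psi(X)=\psi(X)\cup\{a\}$. I would check that $\widetilde\psi$ is injective, that it both preserves and reflects containment (for $X_1,X_2\subseteq\cX$ one has $X_1\subseteq X_2\iff\psi(X_1)\subseteq\psi(X_2)\iff\psi(X_1)\cup\{a\}\subseteq\psi(X_2)\cup\{a\}$, using that $\psi$ is an embedding and that $a\notin\psi(X_i)$), and that it is $\cX$-good since $\widetilde\psi(X)\cap\cX=\psi(X)\cap\cX=X$. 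So the image of $\widetilde\psi$ is an $\cX$-good copy of $\QQ(\cX)$ in $\QQ(\cZ)$ and therefore contains a vertex $F\in\cF$; writing $F=\widetilde\psi(X_0)=\psi(X_0)\cup\{a\}$ gives $a\in F$, hence $F\in\cF_a$, while $F\setminus\{a\}=\psi(X_0)$ lies in the image of $\psi$. Thus $\cF'_a$ — equivalently $\cF_a$ — is a $\cY'$-blocker.

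The argument carries no genuine obstacle beyond bookkeeping: one must track the three ground sets $\cZ$, $\cZ\setminus\{a\}$, and $\cX$ consistently and verify that $\psi$ and $\widetilde\psi$ remain $\cX$-good embeddings — the real content being simply that restricting to vertices containing (respectively, avoiding) a fixed coordinate $a\in\cY$ is compatible with the defining-set mechanism for $\cX$-good copies. One could instead argue via Theorem~\ref{thm:blocker}, trying to extend a $\cY'$-avoiding homomorphism on one part to a $\cY$-avoiding homomorphism on all of $\cF$, but the cross-comparabilities between $\cF_a$ and $\cF_{\bar a}$ make the embedding-transfer route cleaner. Finally, both $\cF_a$ and $\cF_{\bar a}$ are automatically non-empty: the $\cX$-good copies they are shown to meet do exist, and alternatively Lemma~\ref{basic-properties}(ii) supplies a vertex of $\cF$ with $\cY$-part $\cY$ (which lies in $\cF_a$) and one with $\cY$-part $\varnothing$ (which lies in $\cF_{\bar a}$).
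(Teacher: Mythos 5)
Your core idea is sound and genuinely different from the paper's proof: you argue directly from the definition of a blocker by transferring $\cX$-good embeddings between ground sets, whereas the paper goes through Theorem \ref{thm:blocker}, taking a $(\cY\setminus\{a\})$-avoiding homomorphism $\phi$ on one part and extending it to a $\cY$-avoiding homomorphism of all of $\cF$ by mapping every vertex outside that part constantly to $\{a\}$ (respectively to $\cY\setminus\{a\}$); the ``cross-comparabilities'' you worried about cause no trouble there, since a constant image below/above everything is automatically compatible. Your route avoids the characterization theorem entirely, but pays for it in bookkeeping of ambient ground sets, and that is where the one real gap sits.

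Concretely, what you prove is that $\cF_{\bar a}$ is a $(\cY\setminus\{a\})$-blocker \emph{in} $\QQ(\cZ\setminus\{a\})$, and that the shifted family $\cF'_a=\{F\setminus\{a\}\colon F\in\cF_a\}$ is a $(\cY\setminus\{a\})$-blocker in $\QQ(\cZ\setminus\{a\})$; the closing assertion ``$\cF'_a$ --- equivalently $\cF_a$ --- is a $\cY'$-blocker'' is not justified. As the lemma is used in Construction \ref{construction} (with Lemma \ref{basic-properties}, Theorem \ref{thm:min_vertex}, etc.\ later applied to the resulting blockers), the claim is about the induced subposets of $\QQ(\cZ)$ themselves, i.e.\ that they meet every $(\cX\cup\{a\})$-good copy of $\QQ(\cX\cup\{a\})$ in $\QQ(\cZ)$, where $\cX=\cZ\setminus\cY$. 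Your statements are equivalent to this, but the equivalence needs an argument: either invoke Theorem \ref{thm:blocker}, whose homomorphism criterion mentions neither the ambient ground set nor $a$ (and is invariant under $F\mapsto F\setminus\{a\}$ because $F\cap(\cY\setminus\{a\})=(F\setminus\{a\})\cap(\cY\setminus\{a\})$), or perform one more transfer in the reverse direction: given an $(\cX\cup\{a\})$-good embedding $\psi'$ of $\QQ(\cX\cup\{a\})$ into $\QQ(\cZ)$, the maps $X\mapsto\psi'(X)$ and $X\mapsto\psi'(X\cup\{a\})\setminus\{a\}$ are $\cX$-good embeddings into $\QQ(\cZ\setminus\{a\})$, to which your two arguments apply and return a vertex of $\cF_{\bar a}$, respectively of $\cF_a$, lying in the image of $\psi'$. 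With either patch (a few lines) your proof is complete; as written, the final statement is not literally the statement of the lemma.
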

\begin{proof}
Let $\cF'=\{F\in\cF\colon a\in F\}$. 
Assume that $\cF'$ is not a $(\cY\setminus\{a\})$-blocker, i.e.\ by Theorem \ref{thm:blocker} there is a $(\cY\setminus\{a\})$-avoiding homomorphism $\phi\colon \cF'\to \QQ(\cY\setminus\{a\})$.
We find a $\cY$-avoiding homomorphism of $\cF$ in order to reach a contradiction.
Let $\psi\colon \cF\to \QQ(\cY)$ with $$\psi(F)=\begin{cases}\phi(F)\cup\{a\},&\text{ if }F\in\cF'\\ \{a\},&\text{ if }F\notin\cF'.\end{cases}$$
Observe that $\psi$ is a homomorphism, because $\{a\}\subseteq \phi(F)\cup\{a\}$ for all $F\in\cF'$ and $\phi$ is a homomorphism.

For every $F\in\cF\setminus\cF'$, note that $a\in\psi(F)$ but $a\notin F\cap\cY$, thus $\psi(F)\neq F\cap\cY$.
On the other hand, recall that $\phi$ is $(\cY\setminus\{a\})$-avoiding.
Hence for every $F\in\cF'$ we know that $\phi(F)\neq F\cap (\cY\setminus\{a\})$ where $a\notin \phi(F)$ and $a\notin F\cap (\cY\setminus\{a\})$. 
This implies $$\psi(F)=\phi(F)\cup\{a\}\neq F\cap (\cY\setminus\{a\})\cup\{a\}=F\cap\cY.$$
As a result, $\psi$ is a $\cY$-avoiding homomorphism of $\cF$, which is a contradiction.
\\

The second part of the lemma follows from a symmetric argument for $\cF''=\{F\in\cF\colon a\notin F\}$ using the function $\psi\colon \cF\to \QQ(\cY)$,
$$\psi(F)=\begin{cases}\phi(F),&\text{ if }F\in\cF''\\ \cY\setminus\{a\},&\text{ if }F\notin\cF''.\end{cases}$$

%
\end{proof}


\subsection{Properties of $\cN$-free $\cY$-blockers}


\begin{theorem}\label{thm:min_vertex}
Let $\cX$ and $\cY$ be disjoint sets with $\cY\neq\varnothing$.
Let $\cF$ be an $\cN$-free, critical $\cY$-blocker in $\QQ(\cX\cup\cY)$. 
Then $\cF$ has at least one of a minimum vertex or a maximum vertex. 
\end{theorem}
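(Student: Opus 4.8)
The plan is to combine the series--parallel structure of $\cN$-free posets (Theorem \ref{prop:Nfree}) with the homomorphism characterization of blockers (Theorem \ref{thm:blocker}). First I would record the structural setup. Since $\cF$ is $\cN$-free it is series--parallel; since it is a critical $\cY$-blocker it is connected by Lemma \ref{lem:minimal-connected}, and $|\cF|\ge 2^{|\cY|}\ge 2$ by Lemma \ref{basic-properties}(iii). A connected series--parallel poset on at least two vertices is a series composition, so iterating the decomposition I can write $\cF=\GG_1<\GG_2<\dots<\GG_t$ with $t\ge 2$, where each $\GG_i$ is series--indecomposable, i.e.\ a single vertex or a parallel composition of at least two nonempty series--parallel posets. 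The elementary but crucial observation is that $\cF$ has a minimum if and only if $\GG_1$ is a single vertex, and a maximum if and only if $\GG_t$ is a single vertex: all minimal vertices of $\cF$ lie in $\GG_1$ since $\GG_1<\GG_2<\dots$, and a nontrivial parallel composition is disconnected and hence has at least two minimal vertices and no minimum. So I would assume for contradiction that $\cF$ has neither a minimum nor a maximum; then $\GG_1$ and $\GG_t$ are both disconnected, in particular each has at least two vertices.

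The heart of the proof would be the claim that \emph{every vertex of $\GG_1$ has empty $\cY$-part}. Granting it, $\cF$ is the series composition of $\GG_1$ below $\cF\setminus\GG_1$ with $\GG_1\subseteq\{U\in\cF:U\cap\cY=\varnothing\}$, so Lemma \ref{lem:antichain-no-Y} forces $|\GG_1|\le 1$, contradicting $|\GG_1|\ge 2$. (Symmetrically one could instead aim to show that every vertex of $\GG_t$ has full $\cY$-part and invoke the second half of Lemma \ref{lem:antichain-no-Y}; the base case $|\cY|=1$ is already handled by Lemma \ref{lem:chain_blocker}.)

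To prove the claim I would argue by contradiction. Let $W\in\GG_1$ have $W\cap\cY\ne\varnothing$, chosen with $|W\cap\cY|$ minimal among such vertices. The down-set of $W$ lies in $\GG_1$, and any $V\subsetneq W$ has $|V\cap\cY|<|W\cap\cY|$, hence $V\cap\cY=\varnothing$ by minimality; Lemma \ref{lem:chain-no-Y} then forces $V$ to be a minimal vertex of $\cF$, since a vertex strictly below $V$ would again have empty $\cY$-part and be comparable to $V$. Thus the set $L$ of vertices strictly below $W$ is an antichain of minimal vertices of $\cF$ with empty $\cY$-parts, each covered by $W$. Now criticality gives that $\cF\setminus\{W\}$ is not a $\cY$-blocker, so by Theorem \ref{thm:blocker} there is a $\cY$-avoiding homomorphism $\phi\colon\cF\setminus\{W\}\to\QQ(\cY)$. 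Since $\cF$ is a $\cY$-blocker, $\phi$ admits no $\cY$-avoiding extension to $\cF$; but an extension merely needs a value $\phi(W)$ with $\bigcup_{\ell\in L}\phi(\ell)\subseteq\phi(W)\subseteq\bigcap_{V>W}\phi(V)$, a nonempty interval because $\phi$ is a homomorphism and $\ell<W<V$. Hence non-extendability forces $\bigcup_{\ell\in L}\phi(\ell)=W\cap\cY=\bigcap_{V>W}\phi(V)$ for \emph{every} such $\phi$; in particular $L\ne\varnothing$, so $W$ is not minimal.

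It then remains to contradict the equality $\bigcup_{\ell\in L}\phi(\ell)=W\cap\cY$ holding for all $\cY$-avoiding $\phi$ on $\cF\setminus\{W\}$ --- equivalently, to exhibit one such $\phi$ with $\bigcup_{\ell\in L}\phi(\ell)\subsetneq W\cap\cY$, since then $\phi(W):=\bigcup_{\ell\in L}\phi(\ell)$ is a forbidden extension. I would start from an arbitrary $\cY$-avoiding $\phi_0$ and try to reroute it on the down-set $L$, for instance deleting a fixed element $y_0\in W\cap\cY$ from $\phi_0(\ell)$ for all $\ell\in L$ while keeping $\phi_0$ elsewhere, which remains a homomorphism precisely because $L$ is an antichain covered by $W$. \textbf{The main obstacle is exactly here:} this reroute can destroy $\cY$-avoidance at an $\ell\in L$ with $\phi_0(\ell)=\{y_0\}$, and when $|W\cap\cY|=1$ no single-element deletion is available at all. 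Resolving this requires a finer analysis of the vertices covering the members of $L$, where $\cN$-freeness is essential: two distinct minimal vertices below $W$ cannot be separated by a vertex incomparable to $W$ without producing a copy of $\cN$ (one checks this directly from the definition of $\cN$), and this rigidity should let one either choose the deleted vertex more cleverly or instead remove a suitable $\ell\in L$ and argue about $\bigcap_{V>\ell}\phi(V)$, possibly in combination with the reduction in Lemma \ref{lem:reduction}. Carrying out this case analysis is where I expect the real work to lie; the rest is bookkeeping.
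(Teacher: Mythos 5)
Your scaffolding (series--parallel decomposition, connectivity from Lemma \ref{lem:minimal-connected}, and finishing via Lemma \ref{lem:antichain-no-Y}) matches the paper's strategy, but the proposal has a genuine, and indeed self-acknowledged, gap at its central step: the Claim that every vertex of the bottom series-component $\GG_1$ has empty $\cY$-part (or dually that every vertex of $\GG_t$ has full $\cY$-part) is never established. Your attempted proof of it --- taking a $\cY$-avoiding homomorphism $\phi$ on $\cF\setminus\{W\}$ and rerouting it on the down-set of $W$ so as to make $\bigcup_{\ell\in L}\phi(\ell)$ a proper subset of $W\cap\cY$ --- breaks exactly where you say it does: the deletion of a fixed $y_0$ can kill $\cY$-avoidance at some $\ell$ with $\phi(\ell)=\{y_0\}$, and no deletion exists when $|W\cap\cY|=1$. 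Since the whole theorem reduces to this Claim in your argument, the proof is incomplete as written.

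The missing idea is much simpler than the homomorphism surgery you attempt, and it is where the paper goes instead: Lemma \ref{basic-properties}(ii) says that a $\cY$-blocker contains, for \emph{every} $Y\subseteq\cY$, a vertex whose $\cY$-part is exactly $Y$. Writing $\cF$ as a series composition of $\cF_1$ below $\cF_2$ (one top-level split suffices; your full chain $\GG_1<\dots<\GG_t$ is not needed), set $Y_1=(\bigcup_{F\in\cF_1}F)\cap\cY$ and $Y_2=(\bigcap_{F\in\cF_2}F)\cap\cY$. If $Y_1\notin\{\varnothing,\cY\}$, pick $a\in Y_1$, $b\in\cY\setminus Y_1$ and a vertex $U$ with $U\cap\cY=\{b\}$; then $U$ fits in neither $\cF_1$ nor $\cF_2$, a contradiction, so $Y_1\in\{\varnothing,\cY\}$ and likewise $Y_2$; applying the same lemma to any $Y\notin\{\varnothing,\cY\}$ rules out $Y_1=\varnothing$, $Y_2=\cY$, so either all of $\cF_1$ has empty $\cY$-part or all of $\cF_2$ has full $\cY$-part, and Lemma \ref{lem:antichain-no-Y} then yields a one-vertex $\cF_1$ (a minimum) or a one-vertex $\cF_2$ (a maximum). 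Note that $\cN$-freeness is used only for the series--parallel decomposition; no further rigidity argument about copies of $\cN$, and no modification of $\cY$-avoiding homomorphisms beyond what is already inside Lemma \ref{lem:antichain-no-Y}, is required. If you want to salvage your write-up, replace the entire Claim-and-reroute portion by this counting-of-$\cY$-parts argument.
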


\begin{proof}[Proof of Theorem \ref{thm:min_vertex}] 
Since $\cY\neq\varnothing$, Lemma \ref{basic-properties} implies that $|\cF|\geq 2^1$. 
By Theorem \ref{prop:Nfree}, $\cF$ is series-parallel, so it can be partitioned into two disjoint, non-empty posets $\cF_1$ and $\cF_2$ such that $\cF$ is either the parallel composition of $\cF_1$ and $\cF_2$ or the series composition of $\cF_1$ below $\cF_2$.
The former could not happen by Lemma \ref{lem:minimal-connected}.
Thus $\cF$ can be partitioned into two disjoint, non-empty posets $\cF_1$ and $\cF_2$ such that for every $F_1\in\cF_1$ and $F_2\in\cF_2$, $F_1\subseteq F_2$.

Let $Y_1=(\bigcup_{F\in\cF_1} F)\cap\cY$ be the $\cY$-part of the union of all vertices in $\cF_1$ and let $Y_2=(\bigcap_{F\in\cF_2} F)\cap\cY$ be the $\cY$-part of the intersection of all vertices in $\cF_2$. Clearly, $Y_1\subseteq Y_2\subseteq \cY$. 
\\

First assume that $Y_1\notin \{\varnothing,\cY\}$.
Then there are $a\in Y_1$ and $b\in\cY\setminus Y_1$.
Lemma \ref{basic-properties} provides that the $\cY$-blocker $\cF$ contains a vertex $U$ with $U\cap\cY=\{b\}$.
Then $U\notin\cF_1$ since $b\in U$ while $b\notin Y_1$, but also $U\notin\cF_2$ as $a\notin U$ while $a\in Y_1\subseteq Y_2$. 
We arrive at a contradiction, hence $Y_1\in\{\varnothing,\cY\}$. Symmetrically, $Y_2\in\{\varnothing,\cY\}$.
Take an arbitrary $\cY$-part $Y\subseteq\cY$ such that $Y\notin\{\varnothing,\cY\}$. By Lemma \ref{basic-properties} there is a vertex $Z\in\cF$ with $Z\cap\cY=Y$.
Then $Z\in\cF_1$ or $Z\in\cF_2$. In the first case we obtain that $Y_1\neq\varnothing$, thus $Y_1=\cY$ and hence $Y_2=\cY$ (because $Y_1\subseteq Y_2\subseteq\cY$). 
In the second case $Y_2\neq\cY$, so $Y_1=Y_2=\varnothing$.

Thus either $Y_1=Y_2=\varnothing$ or $Y_1=Y_2=\cY$. For the rest of the proof we suppose that $Y_1=Y_2=\varnothing$. If $Y_1=Y_2=\cY$, a symmetric argument holds.
\\

Because $Y_1=(\bigcup_{F\in\cF_1} F)\cap\cY=\varnothing$, we obtain $F\cap\cY=\varnothing$ for every $F\in\cF_1$. 
By Lemma~\ref{lem:antichain-no-Y}, there is at most one vertex in $\cF_1$. 
The unique vertex $Z\in \cF_1$ is the unique minimal vertex of $\cF$ and $Z\cap\cY=Y_1=\varnothing$.
In the case that $Y_1=Y_2=\cY$, we can argue symmetrically and obtain that $Z$ is a maximum of $\cF$ and $Z\cap\cY=Y_2=\cY$.
\end{proof}

\subsection{Construction of the family $\{(\cF_S, Z_S, A_S, B_S): ~S\in \cS\}$}

In the following proof we will define posets and vertices indexed by \textit{ordered sets}.

\begin{definition}
An \textit{ordered set} $S$ is a sequence $S=(y_1,\dots,y_m)$ of distinct elements $y_i$, $i\in[m]$. Given a set $\cY$, $S$ is an \textit{ordered subset} of $\cY$ if $y_i\in\cY$ for all $i\in[m]$.
We denote the \textit{empty} ordered set by $\varnothing_o=()$.
The underlying unordered set of $S$ is denoted by $\underline{S}$ and $|S|=|\underline{S}|$ is the \textit{size} of $S$.
For an ordered set $S=(y_1, \ldots, y_m)$ and an element $y_{m+1}\notin \underline{S}$, we write $(S, y_{m+1})$ for an ordered set $(y_1, \ldots, y_m, y_{m+1})$.
We say that an ordered set $S'$ is a \textit{prefix} of $S$ if $|S|\ge |S'|$ and each of the first $|S'|$ members of $S$ coincides with the respective member of $S'$.
Note that $\varnothing_o$ is a prefix of every ordered set. For $i\in\{0,\dots,|S|\}$, we denote by $S[i]$ the unique prefix of $S$ of size $i$.
A prefix $S'$ of $S$ is \textit{strict} if $S'\neq S$. 
For a set $\cY$ and an ordered subset $S$ of $\cY$, we denote the set of all elements of $\cY$ that are not in $S$ by $\cY-S=\cY\setminus\underline{S}$.
 \end{definition}
 
In the following we analyse the structure of an $\cN$-free critical $\cY$-blocker by selecting smaller and smaller subposets which are critical $\cY'$-blockers for some $\cY'\subseteq\cY$.
Recall that Theorem \ref{thm:min_vertex} implies that any critical $\cY'$-blocker has either a minimum or a maximum vertex, we call such a vertex a {\it root} of the blocker. 
Note that the blocker could have both a minimum vertex and a maximum vertex. In this case we select on of them to be the assigned root of the blocker and ignore the second root.

\begin{construction}\label{construction} ~
Let $\cY$ be a set with $|\cY|=k$. Let $\cF$ be an $\cN$-free, critical $\cY$-blocker in $\QQ(\cZ)$, $\cY\subseteq \cZ$. Let $\cS$ be the set of all ordered subsets of $\cY$ of size at most $k-1$.
In the following we recursively construct a family $\{(\cF_S, Z_S, A_S,B_S): ~S\in \cS\}$, where $\cF_S$ is a critical $(\cY-S)$-blocker, $\cF_S\subseteq \cF$, and $Z_S$ is the root of $\cF_S$.
In addition $A_S\cup B_S=\underline{S}$, where each element of $A_S$ is included in each vertex of $\cF_S$ and each element of $B_S$ is excluded from each vertex of $\cF_S$. The sets $A_S$ and $B_S$ are used as tools to encode crucial information on the blocker $\cF_S$ and its root $Z_S$ as well as $\cF_{S'}$ and $Z_{S'}$ for prefixes $S'$ of $S$.
If the root $Z_S$ is a minimum vertex in $\cF_S$, we say that $S$ is {\it min-type}, otherwise we say that $S$ is {\it max-type}.\\

{\bf Initial step. } Let $S= {\varnothing_o}$. In this case let $\cF_S=\cF$. Let $Z_{S}$ be an arbitrarily chosen root of $\cF$, i.e.\ a minimum or maximum of $\cF$, which exists due to Theorem \ref{thm:min_vertex}. Let $A_{S}=B_S=\varnothing$.\\

{\bf General iterative step. } Consider an arbitrary non-empty ordered subset $S$ of $\cY$ with $|S|\le k-1$. Let $S'$ be the prefix of $S$ such that $(S',a)=S$ for some $a\in\cY$.
Given $(\cF_{S'}, Z_{S'},A_{S'},B_{S'})$ such that $\cF_{S'}$ is a critical $(\cY-{S'})$-blocker, $Z_{S'}$ is a root of $\cF_{S'}$, and $A_{S'}, B_{S'}$ are disjoint sets partitioning $A_{S'}\cup B_{S'}=\underline{S'}$, we shall construct $\cF_{S}$, $Z_{S}$, $A_{S}$, and $B_{S}$.
By Lemma~\ref{lem:reduction} and since $(\cY-{S'})\setminus\{a\}=\cY-S$, the sets $\{F\in \cF_{S'}: a \in F\}$ and $\{F\in\cF_{S'}: a\notin F\}$ induce $(\cY-S)$-blockers. \\

If ${S'}$ is min-type, we define $\cF_{S}$ to be an arbitrary critical $(\cY-S)$-blocker which is an induced subposet of $\{F\in \cF_{S'}: a \in F\}$. Note that $a\in F$ for every $F\in\cF_{S}$. Let $A_{S} = A_{S'}\cup \{a\}$ and $B_S=B_{S'}$. 

If ${S'}$ is max-type, we define $\cF_{S}$ to be an arbitrary critical $(\cY-S)$-blocker which is an induced subposet of $\{F\in \cF_{S'}: a \notin F\}$. Note that in this case $a\notin F$ for every $F\in\cF_{S}$. Let $A_S=A_{S'}$ and $B_{S}=B_{S'}\cup \{a\}$. \\

It remains to select $Z_{S}$. Theorem \ref{thm:min_vertex} provides the existence of a root in $\cF_{S}$.
If $|{S}|\le k-2$, let $Z_{S}$ be an arbitrary root of $\cF_{S}$. If $|S|=k-1$, we need to be more careful in choosing $Z_{S}$.  
We have that $\cF_{S}$ is a critical $(\cY-S)$-blocker, for $|\cY-S|=1$. 
By Lemma \ref{lem:chain_blocker}, $\cF_{S}$ has exactly two vertices, a minimum and a maximum.
If ${S'}$ is min-type, let $Z_{S}$ be the minimum of $\cF_{S}$, i.e.\ ${S}$ is min-type.
If ${S'}$ is max-type, let $Z_{S}$ be the maximum of $\cF_{S}$, here ${S}$ is max-type.
\\

\begin{figure}[h]
\centering
\includegraphics[scale=0.6]{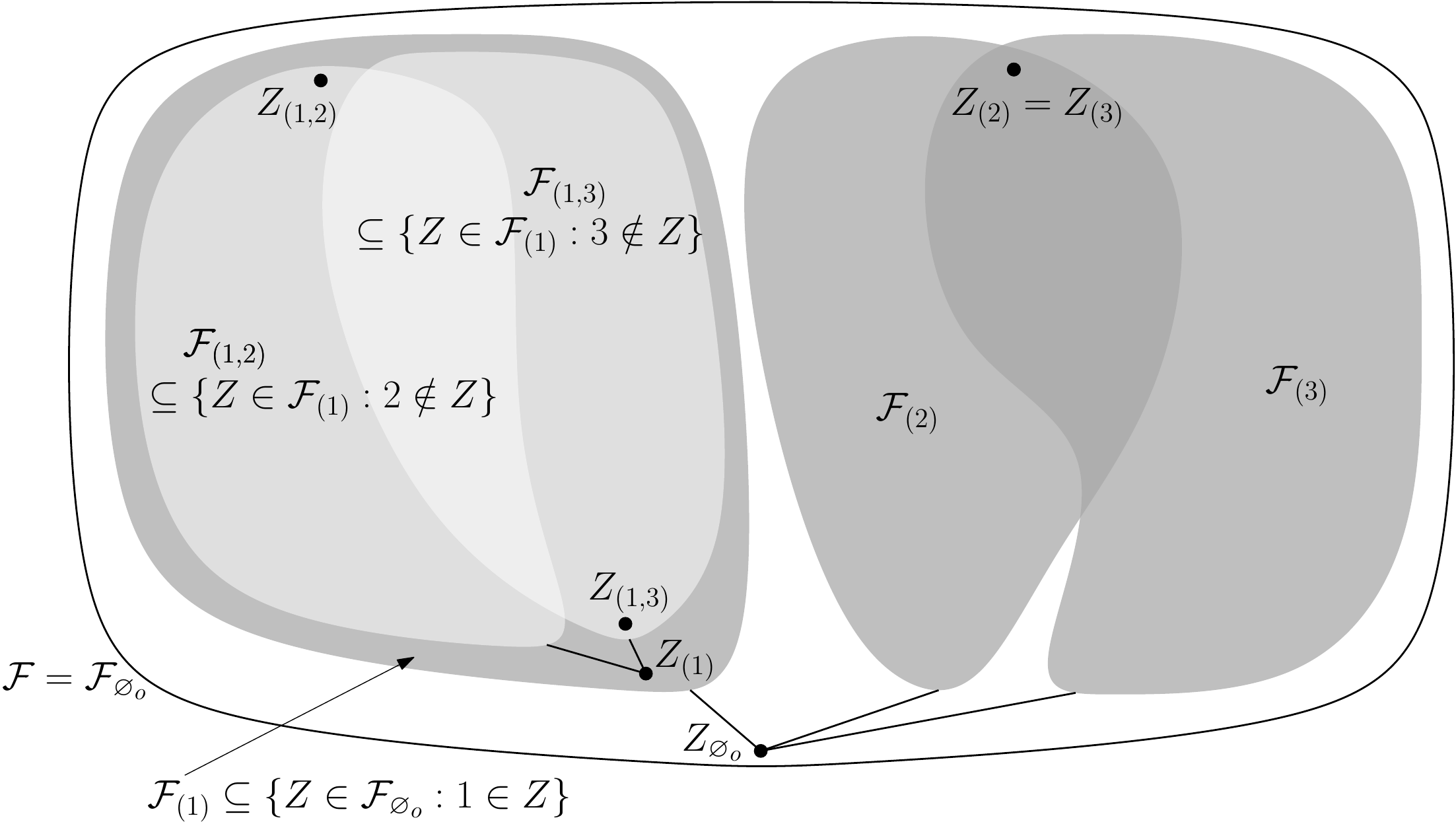}
\caption{Exemplary construction of $\cF_{(1,3)}$ and $\cF_{(1,2)}$ for $\cY=\{1,2,3\}$.}

\label{fig:recursion}
\end{figure}

The construction terminates after all ordered subsets of $\cY$ of size at most $k-1$ have been considered. 
The family $\{(\cF_S, Z_S,A_S,B_S): ~S\in \cS\}$ gives a recursive structural decomposition of $\cF$ into ``up'' and ``down'' components, i.e.\ max-type and min-type blockers, as illustrated in Figure \ref{fig:recursion}. Note that blockers $\cF_S$ may heavily overlap. Several properties follow immediate from the construction.
\end{construction}

\begin{lemma}\label{lem:construction}\label{lem:Ypart}
Let $S$ be an ordered subset of $\cY$ of size at most $k-1$ and let $S'$ be a prefix of $S$. Then
\begin{enumerate}
\item[(i)] $\cF_{S'}\subseteq\cF_S$, $A_{S'}=A_S\cap \underline{S'}$ and $B_{S'}=B_S\cap \underline{S'}$.
\item[(ii)] The size of the set $A_S$ is equal to the number of min-type strict prefixes $S'$ of $S$. The size of $B_S$ is equal to the number of max-type strict prefixes $S'$ of $S$.
\item[(iii)]  If $S$ is min-type, $\cY\cap Z_{{S}} = \{y\in\cY: y\in Z_{S}\}= A_{S}$.
If $S$ is max-type, $\cY\setminus Z_{{S}} = \{y\in\cY: y\notin Z_{S}\} = B_{S}$.
\end{enumerate}
\end{lemma}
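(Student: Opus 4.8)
The plan is to prove (i), (ii), (iii) by unwinding Construction \ref{construction} along the prefix order, each item via a short induction on $|S|$. I would start with part (i), for which it suffices to handle the case where $S'$ is the immediate predecessor $S[\,|S|-1\,]$ of $S$, i.e.\ $S=(S',a)$; a general prefix $S'$ is then reached by iterating along $S'=S[\,|S'|\,]\subseteq\cdots\subseteq S[\,|S|\,]=S$, and since $\underline{S[i]}\subseteq\underline{S[i+1]}$ the successive intersections compose to $A_S\cap\underline{S'}$ and $B_S\cap\underline{S'}$. In the immediate case $S=(S',a)$, the inclusion $\cF_S\subseteq\cF_{S'}$ is direct from the construction, since $\cF_S$ is an induced subposet of $\{F\in\cF_{S'}:a\in F\}$ or of $\{F\in\cF_{S'}:a\notin F\}$, each contained in $\cF_{S'}$; and since $A_S$ equals $A_{S'}$ or $A_{S'}\cup\{a\}$ with $a\notin\underline{S'}$ (and symmetrically for $B_S$), intersecting with $\underline{S'}$ returns $A_{S'}$, resp.\ $B_{S'}$.

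Next I would record two invariants, proved together by a one-line induction on $|S|$ with trivial base $S=\varnothing_o$: (a) $A_S$ and $B_S$ are disjoint with $A_S\cup B_S=\underline S$; and (b) every vertex $F\in\cF_S$ satisfies $A_S\subseteq F$ and $B_S\cap F=\varnothing$. In the step $S=(S',a)$ the element $a$ is fresh ($a\notin\underline{S'}$) and is placed into $A_{S'}$ exactly when $\cF_S\subseteq\{F\in\cF_{S'}:a\in F\}$ and into $B_{S'}$ exactly when $\cF_S\subseteq\{F\in\cF_{S'}:a\notin F\}$, so together with $\cF_S\subseteq\cF_{S'}$ from part (i) and the inductive hypothesis, both (a) and (b) follow. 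Part (ii) is then another induction on $|S|$: the strict prefixes of $S=(S',a)$ are exactly those of $S'$ together with $S'$ itself, so if $S'$ is min-type both $|A_S|$ and the number of min-type strict prefixes of $S$ exceed their values for $S'$ by exactly one while the $B$-side is unchanged, and symmetrically if $S'$ is max-type.

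Part (iii) carries the actual content and is the step I expect will require the most care. Suppose $S$ is min-type, so $Z_S$ is a minimum vertex of $\cF_S$ — including the case $|S|=k-1$, where $Z_S$ was chosen precisely as the minimum of the two-element blocker $\cF_S$. Viewing $\cF_S$ as a $(\cY-S)$-blocker, Lemma \ref{basic-properties}(ii) yields $Z_S\cap(\cY-S)=\varnothing$; and since $Z_S\in\cF_S$, invariant (b) gives $A_S\subseteq Z_S$ and $B_S\cap Z_S=\varnothing$ while invariant (a) gives the partition $\cY=A_S\sqcup B_S\sqcup(\cY-S)$, so intersecting with $Z_S$ gives $\cY\cap Z_S=A_S$. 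If $S$ is max-type, $Z_S$ is instead a maximum of $\cF_S$, so Lemma \ref{basic-properties}(ii) gives $(\cY-S)\subseteq Z_S$; combined with $A_S\subseteq Z_S$, $B_S\cap Z_S=\varnothing$ and the same partition of $\cY$, this gives $\cY\setminus Z_S=B_S$. The one pitfall in (iii) is to apply invariant (b) and Lemma \ref{basic-properties}(ii) to $\cF_S$ relative to its own ground set $\cY-S$, not relative to $\cY$.
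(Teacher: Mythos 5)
Your proof is correct and takes essentially the same route as the paper: (i) and (ii) are read off directly from Construction \ref{construction} (the paper dismisses them as easy), and for (iii) you, like the paper, apply Lemma \ref{basic-properties} to the $(\cY-S)$-blocker $\cF_S$ to get $Z_S\cap(\cY-S)=\varnothing$ in the min-type case or $\cY-S\subseteq Z_S$ in the max-type case, and combine this with the construction's invariant $A_S\subseteq Z_S$, $B_S\cap Z_S=\varnothing$, $A_S\cup B_S=\underline{S}$ (which you verify by induction rather than just quoting). One small remark: the inclusion you establish, $\cF_S\subseteq\cF_{S'}$, is the one the construction actually provides and the one invoked later in Claim 2, so the printed ``$\cF_{S'}\subseteq\cF_S$'' in the lemma statement is a typo and your reading is the intended one.
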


\begin{proof}$(i)$ and $(ii)$ are easy to see. For part $(iii)$, recall that $\cF_{S}$ is a $(\cY-S)$-blocker. 
If ${S}$ is min-type, then $Z_{S}$ is a minimum of $\cF_{S}$, so $Z_{S}\cap(\cY-S)=\varnothing$ by Lemma \ref{basic-properties}.
Thus $$Z_{{S}}\cap\cY= Z_{S}\cap \underline{S}=A_{S}.$$
Similarly, if $\cF_{S}$ is max-type, then Lemma \ref{basic-properties} provides $Z_{S}\cap(\cY-S)=(\cY-S)$. 
Hence $$Z_{{S}}\cap\cY= \big(Z_{S}\cap \underline{S}\big) \cup (\cY-S)=A_{S}\cup (\cY-S)=\cY\setminus B_{S},$$
therefore $\cY\setminus Z_{{S}}=B_{S}$.
\end{proof}

\begin{lemma}
Let $S$ be an ordered subset of $\cY$ of size $k-1$ and let $S'$ be a strict prefix of $S$.
Then $Z_S\cap(\cY- S')\notin \{\varnothing,\cY-S'\}$. 
\end{lemma}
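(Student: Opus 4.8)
The goal is to show that for an ordered set $S$ of full size $k-1$ and a strict prefix $S'$ of $S$, the vertex $Z_S$ (the root of the two-element chain $\cF_S$) has $Z_S\cap(\cY-S')\notin\{\varnothing,\cY-S'\}$. The natural strategy is to split $Z_S\cap(\cY-S')$ into two disjoint parts: the part landing in $\underline{S}-\underline{S'} = \underline{S}\setminus\underline{S'}$, and the part landing in $\cY-S$. I plan to pin down each part using Lemma~\ref{lem:Ypart}.

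\textbf{Step 1: Understand $Z_S\cap(\cY-S)$.} Since $|S|=k-1$, the blocker $\cF_S$ is a critical $(\cY-S)$-blocker with $|\cY-S|=1$, so by Lemma~\ref{lem:chain_blocker} it is a two-vertex chain $\{X_1,X_2\cup(\cY-S)\}$ with $X_1\subseteq X_2\subseteq\cX\cup\underline{S}$. If $S$ is min-type, $Z_S=X_1$ is the minimum, so $Z_S\cap(\cY-S)=\varnothing$; if $S$ is max-type, $Z_S=X_2\cup(\cY-S)$ is the maximum, so $Z_S\cap(\cY-S)=\cY-S$. This is just Lemma~\ref{basic-properties}(ii) applied to $\cF_S$ as a $(\cY-S)$-blocker.

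\textbf{Step 2: Understand $Z_S\cap(\underline{S}\setminus\underline{S'})$.} By Lemma~\ref{lem:Ypart}(iii), if $S$ is min-type then $Z_S\cap\cY=A_S$, and if $S$ is max-type then $\cY\setminus Z_S=B_S$, i.e.\ $Z_S\cap\cY=\cY\setminus B_S$. In the min-type case, $Z_S\cap(\underline{S}\setminus\underline{S'})=A_S\setminus\underline{S'}=A_S\setminus A_{S'}$ using Lemma~\ref{lem:Ypart}(i) (since $A_{S'}=A_S\cap\underline{S'}$ and $B_S\cap\underline{S'}=B_{S'}$, so $A_S\setminus\underline{S'}=A_S\setminus A_{S'}$). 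Now $S'$ is a \emph{strict} prefix of $S$, so there is at least one step from $S'$ to $S$; I will argue that along the chain of prefixes from $S'$ to $S$, at least one step is min-type (adding an element to the $A$-set) and at least one step is max-type (adding to the $B$-set). Here is why: in the construction, the step extending a prefix $T$ to $(T,a)$ is min-type exactly when $T$ is min-type, max-type exactly when $T$ is max-type; and crucially the \emph{type of $(T,a)$ equals the type of $T$} when $|(T,a)|\le k-2$, but when we reach size $k-1$ the type is \emph{re-chosen} — wait, rereading: at size $k-1$, if $S'$ (the predecessor) is min-type then $Z_S$ is chosen as the minimum, so $S$ stays min-type; likewise max stays max. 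So the type is actually constant along the whole chain from $\varnothing_o$ to $S$! That cannot be right for the claim to hold, so let me reconsider: the point must be that the root of an \emph{intermediate} blocker $\cF_T$ for $|T|\le k-2$ is chosen \emph{arbitrarily}, independently of the parent's type. So along the chain $\varnothing_o, S'[1], \dots, S'=S[j], \dots, S[k-1]=S$, the types can vary freely except that the \emph{last} step's type matches $S[k-2]$'s type. The claim $Z_S\cap(\cY-S')\notin\{\varnothing,\cY-S'\}$ will follow once I show that the chain of prefixes strictly between $S'$ (inclusive of its successor) and $S$ contains both a min-type and a max-type prefix — equivalently both $A_S\setminus A_{S'}\neq\varnothing$ and $B_S\setminus B_{S'}\neq\varnothing$.

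\textbf{Step 3: Assembling and locating the obstacle.} Combining Steps 1 and 2: $Z_S\cap(\cY-S') = \big(Z_S\cap(\underline S\setminus\underline{S'})\big)\,\cup\,\big(Z_S\cap(\cY-S)\big)$. In the min-type case the first part is $A_S\setminus A_{S'}$ and the second is $\varnothing$, so $Z_S\cap(\cY-S')=A_S\setminus A_{S'}$; this is nonempty iff some prefix strictly between $S'$ and $S$ is min-type, and it is a proper subset of $\cY-S'$ iff $(\cY-S')\setminus(A_S\setminus A_{S'})$ is nonempty, and that set contains $(\cY-S)$ together with $B_S\setminus B_{S'}$ — in particular it contains the single element of $\cY-S$, which is nonempty. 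So in the min-type case properness is automatic, and nonemptiness reduces to: at least one prefix strictly above $S'$ is min-type. Symmetrically, in the max-type case $Z_S\cap(\cY-S')=(\underline S\setminus\underline{S'})\setminus B_S \cup (\cY-S) = (A_S\setminus A_{S'})\cup(\cY-S)$, which is nonempty (it contains $\cY-S$) and proper iff some prefix strictly above $S'$ is max-type. \textbf{The main obstacle} is therefore the combinatorial claim that between any strict prefix $S'$ and the full-size $S$ there is at least one min-type and at least one max-type intermediate prefix. I expect this is NOT true in general from the construction as stated — so either the lemma's statement implicitly uses additional properties established in the surrounding text (perhaps that in the eventual application $S$ is chosen so that types alternate, or a pigeonhole over which elements of $\cY$ got assigned to $A$ vs $B$), or there is a slicker argument: namely, $Z_S\cap(\cY-S')=\varnothing$ would force, via Lemma~\ref{basic-properties}(ii) applied to the $(\cY-S')$-blocker $\cF_{S'}$, that $Z_S$ behaves like a minimum on the coordinates $\cY-S'$, and $Z_S\cap(\cY-S')=\cY-S'$ would force max-like behavior; but $Z_S\in\cF_{S'}$ sits strictly below (resp.\ above) or incomparable to the \emph{other} vertices of $\cF_{S'}$ that Lemma~\ref{basic-properties}(ii) guarantees at every $\cY-S'$ value, and Lemma~\ref{lem:chain-no-Y}/\ref{lem:antichain-no-Y} forbid two vertices both with empty (resp.\ full) $(\cY-S')$-part being comparable — giving a contradiction with $Z_S$ being a root. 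I would pursue this second route: \emph{assume} $Z_S\cap(\cY-S')=\varnothing$; then $Z_S$ and a guaranteed minimum-type vertex of $\cF_{S'}$ are two distinct vertices of $\cF_{S'}$ with empty $(\cY-S')$-part, and since $Z_S$ is either $\subseteq$ or $\supseteq$ every vertex in the two-element chain it generates... — careful bookkeeping here, using criticality of $\cF_{S'}$ and Lemma~\ref{lem:chain-no-Y}, should yield the contradiction, and symmetrically for $\cY-S'$. The cleanest writeup will likely combine the explicit $A/B$ description from Steps 1--2 with Lemma~\ref{lem:chain-no-Y} to rule out the two degenerate cases.
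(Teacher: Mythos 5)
Your decomposition of $Z_S\cap(\cY-S')$ into the part inside $\underline{S}\setminus\underline{S'}$ and the part inside $\cY-S$, together with the identities $Z_S\cap\cY=A_S$ (min-type) and $Z_S\cap\cY=\cY\setminus B_S$ (max-type) from Lemma~\ref{lem:Ypart}(iii), is exactly the right bookkeeping, and your reduction in Step 3 is essentially correct: in the min-type case properness is automatic and you only need $A_S\setminus A_{S'}\neq\varnothing$, in the max-type case nonemptiness is automatic and you only need $B_S\setminus B_{S'}\neq\varnothing$. The gap is that you then replace this one-sided requirement by the stronger claim that \emph{both} a min-type and a max-type prefix must occur between $S'$ and $S$, correctly suspect that this stronger claim is false in general, and abandon the route without finishing. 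The missing observation — which is precisely how the paper closes the argument — is that the requirement is tied to the type of $S$ itself, and Construction~\ref{construction} forces the final step to inherit its type: since $|S|=k-1$, the root $Z_S$ is chosen so that $S$ has the same type as its prefix $S''=S[k-2]$. Hence if $S$ is min-type, then $S''$ is min-type, so the last member $a$ of $S$ lies in $A_S\setminus A_{S''}\subseteq A_S\setminus A_{S'}$ (note $|S'|\le k-2$, so $S''$ is among the relevant prefixes), giving $a\in Z_S\cap(\cY-S')$, while the unique element $b$ of $\cY-S$ satisfies $b\notin A_S=Z_S\cap\cY$; the max-type case is symmetric with $a\in B_S$ and $b\in Z_S$. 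In other words, $Z_S\cap(\cY-S'')$ is exactly $\{a\}$ or $\{b\}$, a nonempty proper subset of $\{a,b\}=\cY-S''$, and the general statement follows because $\cY-S''\subseteq\cY-S'$.

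Your fallback sketch does not repair this. Assuming $Z_S\cap(\cY-S')=\varnothing$ and invoking Lemma~\ref{basic-properties}(ii) only produces \emph{some} vertex of $\cF_{S'}$ with empty $(\cY-S')$-part; Lemma~\ref{lem:chain-no-Y} needs that vertex to be \emph{comparable} to $Z_S$, which you can only guarantee when $S'$ is min-type (so that its root $Z_{S'}$ is a minimum below $Z_S$). When $S'$ is max-type the argument as sketched gives no comparable pair, and Lemma~\ref{lem:antichain-no-Y} is not applicable without a series-decomposition structure you have not established; fixing this forces you back to a case analysis on the type of $S$, i.e.\ to the inherited-type observation above. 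So the proposal identifies the right reduction but stops short of a proof at the one step where the paper's argument actually happens.
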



\begin{proof} Note that $|\cY-S|=1$, so let $\cY-S=\{b\}$.
First we consider the case that $|S'|=k-2$, i.e.\ $S=(S',a)$ for some $a\in\cY$. Note that $\cY-S'=\{a,b\}$. 
We shall show that one of the two elements $a,b$ is in $Z_S$ while the other is not.
\\

If $S$ is min-type, we obtain from the construction that $a\in A_S$. By Lemma \ref{lem:construction} $(iii)$, $A_S=Z_S\cap \cY$, so in particular $a\in Z_S\cap (\cY-S')$.
On the other hand, $A_S\subseteq \underline{S}$, so $b\notin A_S=Z_S\cap \cY$ and thus $b\notin Z_S\cap (\cY-S')$.

If $S$ is max-type we can argue similarly. Note that $a\in B_S$. By  Lemma \ref{lem:construction} $(iii)$, $B_S=\cY\setminus Z_S$, so $a\notin  Z_S\cap (\cY-S')$.
Furthermore, $B_S\subseteq \underline{S}$, so $b\notin B_S=\cY\setminus Z_S$. Thus $b\in Z_S\cap\cY$ and hence $b\in Z_S\cap (\cY-S')$.
\\

It remains to consider the case $|S'|<k-2$. Let $S''=S[k-2]$ be the prefix of $S$ of size $k-2$, then $S'$ is a prefix of $S''$. Observe that $\cY-S''\subseteq\cY-S'$.
We already showed that $Z_S\cap(\cY-S'')\notin \{\varnothing,\cY-S''\}$, so in particular $Z_S\cap(\cY-S')\notin \{\varnothing,\cY-S'\}$.
\end{proof}

%



\section{Proof of Theorem \ref{thm_N}}\label{sec:N}

\begin{proof}[Proof of Theorem \ref{thm_N}]
%
%
%
Let $k$ and $N$ be arbitrary integers with $N\ge k$, let $n$ such that $N=n+k$.
Let $\cY$ be a set on $|\cY|=k$ elements, say without loss of generality $\cY = \{1,\ldots, k\}$. Fix $\cZ$ with $\cY\subseteq \cZ$ and $|\cZ|=N$.
Suppose that there is an $\cN$-free, critical $\cY$-blocker $\cF$ in $\QQ(\cZ)$. 
In other words, suppose that the integer $N$ is sufficiently large with respect to $k$ such that there exists an $\cF$ with these properties in $\cZ$.

In the following we show that $\cF$ contains an antichain of size at least $k!2^{-k-1}$. Applying Sperner's theorem we obtain that $n+k=N=|\cZ|\ge k\log k$,
which implies that $k\le (1+o(1))n/ {\log n}$, i.e.\ $N=n+k\le n+(1+o(1))n/{\log n}$. Then Theorem \ref{thm:mPk} provides the required bound.
Next we argue that in $\cF$ there exists such a large antichain.
\\

Let $\cS$ be the set of all ordered subsets of $\cY$ of size at most $k-1$. 
Consider the family $\{(\cF_S, Z_S, A_S, B_S): ~S\in \cS\}$ given by Construction \ref{construction}. 
Let $\cS_1$ be the family of all ordered subsets of $\cY$ of size exactly $k-1$. 
We introduce two kinds of equivalence between elements in $\cS_1$, \textit{type-equivalence} and \textit{intersection-equivalence}.
In the following we will show the existence of a large subfamily $\cS_3\subseteq\cS_1$ such that its elements are pairwise type-equivalent but not intersection-equivalent.
We further prove that the vertices $\{Z_S: S\in\cS_3\}$ induce a large antichain in $\QQ(\cX\cup\cY)$.
\\


Let $S_1,S_2\in\cS_1$ be two ordered subsets of $\cY$ of size $k-1$.
We say that $S_1$ and $S_2$ are \textit{type-equivalent} if for any prefixes $S'_1$ of $S_1$ and $S'_2$ of $S_2$ of the same size, ${S'_1}$ is min-type if and only if ${S'_2}$ is min-type. Equivalently, ${S'_1}$ is max-type if and only if ${S'_2}$ is max-type.
The ordered sets $S_1$ and $S_2$ are \textit{intersection-equivalent} if for any same-sized prefixes $S'_1$ of $S_1$ and $S'_2$ of $S_2$, $Z_{S'_1}\cap\cY=Z_{S'_2}\cap\cY$.
It is obvious that both notions define equivalence relations on $\cS_1$. Note that intersection-equivalence of two ordered sets in $\cS_1$ is a very strong property. 
It provides a good intuition to think of intersection-equivalent ordered sets as equal. 
Several technical parts of the proof, in particular in Claim 1, arise from the fact that there might be intersection-equivalent ordered sets which are distinct.
\\

\noindent\textbf{Claim 1.} There exists a subfamily $\cS_3\subseteq\cS_1$ of size at least $2^{-k-1}k!$ such that any two distinct ordered sets $S_1,S_2\in\cS_3$, are type-equivalent but not intersection-equivalent.\smallskip\\
\textit{Proof of Claim 1.} Recall that $|\cS_1|=k!$. For every $i\in\{0,\dots,k-1\}$ and for every $S\in\cS_1$, the prefix $S[i]$ of $S$ of size $i$ is either min-type or max-type.
By pigeonhole principle for fixed $i$, there are at least $|\cS_1|/2$ ordered subsets $S\in\cS_1$ such that all prefixes $S[i]$ are of the same type.
Inductively, we find a subfamily $\cS_2\subseteq\cS_1$ of size at least $2^{-k}|\cS_1|$ such that for any fixed $i\in\{0,\dots,k-1\}$, all prefixes $S[i]$, $S\in\cS_2$ have the same type. Equivalently, the elements of $\cS_2$ are pairwise type-equivalent.
\\

In the following we show that each intersection-equivalence class in $\cS_2$ has size at most~$2$. 
Thus by selecting a representative of each equivalence class we obtain a subfamily $\cS_3$ as required.
\\

Given an arbitrary fixed $S_1\in\cS_2$, consider an ordered set $S_2\in\cS_2$ such that $S_1$ and $S_2$ are intersection-equivalent, i.e.\ $Z_{S'_1}\cap\cY=Z_{S'_2}\cap\cY$ for every two same-sized prefixes $S'_1$ of $S_1$ and $S'_2$ of $S_2$.
Without loss of generality suppose that $S_1=(1,2,\dots,k-1)$ and $\cY-S_1=\{k\}$. Let $S_2=(y_1,\dots,y_{k-1})$ and $\cY-S_2=\{y_k\}$. 
We shall show that $y_i=i$ for all but at most two indices $i\in[k]$, which implies that $S_2$ is either equal to $S_1$ or obtained from $S_1$ by interchanging the two differing members, and therefore the intersection-equivalence class of $S_1$ consists of at most $2$ members.
\\

Since $S_1$ and $S_2$ are both in $\cS_2$, i.e.\ type-equivalent, we know that for every $i\in\{0,\dots,k-1\}$ either both ${S_1[i]}$ and ${S_2[i]}$ are min-type or both ${S_1[i]}$ and ${S_2[i]}$ are max-type.
We enumerate the index set $\{0,\dots,k-1\}$ as follows.
Let $i_1,\dots,i_p$ be the indices $i\in\{0,\dots,k-1\}$ such that ${S_1[i]}$ and ${S_2[i]}$ are min-type in increasing order.
Similarly, let $j_1,\dots,j_q$ enumerate in increasing order the indices $j\in\{0,\dots,k-1\}$ where ${S_1[j]}$ and ${S_2[j]}$ are max-type.
Note that $\{i_1,\dots,i_p\}\cup\{j_1,\dots,j_q\}=\{0,\dots,k-1\}$.
\\

Now consider any two consecutive indices $i=i_{\ell}$ and $i'=i_{\ell+1}$ for some fixed $\ell\in[p-1]$. 
We know from Lemma \ref{lem:Ypart} $(iii)$ that $Z_{S_1[i]}\cap \cY=A_{S_1[i]}\text{ and }Z_{S_1[i']}\cap \cY=A_{S_1[i']}.$
Our next step is to show that $i+1$ is the unique element in the set difference of those two sets.
\\

Recall that $S_1[i]$ is min-type.
In Construction \ref{construction} in the iterative step for $S_1[i+1]=(S_1[i],i+1)$, we defined $A_{S_1[i+1]}=A_{S_1[i]}\cup\{i+1\}$.
Note that $i+1\in A_{S_1[i+1]}\subseteq A_{S_1[i']}$, while $i+1\notin A_{S_1[i]}$ because $A_{S_1[i]}\subseteq \underline{S_1[i]}=\{1,\dots,i\}$ by Lemma \ref{lem:construction} $(i)$.
Furthermore, Lemma \ref{lem:construction}~$(ii)$ provides that $|A_{S_1[i]}|=\ell$ and $|A_{S_1[i']}|=\ell+1$, thus $|A_{S_1[i']}\setminus A_{S_1[i]}|=1$ and so
$$(Z_{S_1[i']}\cap \cY)\setminus (Z_{S_1[i]}\cap \cY)=A_{S_1[i']}\setminus A_{S_1[i]}=\{i+1\}.$$
Similarly for $S_2$ we obtain
$$(Z_{S_2[i']}\cap \cY)\setminus (Z_{S_2[i]}\cap \cY)=A_{S_2[i']}\setminus A_{S_2[i]}=\{y_{i+1}\}.$$
Since $S_1$ and $S_2$ are intersection-equivalent, we find that $y_{i+1}=i+1$.
\\

We obtain that $y_{i_{\ell}+1}=i_{\ell}+1$ for every $\ell\in[p-1]$. 
For $j_1,\dots,j_q$ a symmetric argument for  $j=j_{\ell}$ and $j'=j_{\ell+1}$ considering the set difference 
$$(Z_{S_1[j]}\cap \cY)\setminus (Z_{S_1[j']}\cap \cY)=(\cY\setminus B_{S_1[j]})\setminus (\cY\setminus B_{S_1[j']})=B_{S_1[j']}\setminus B_{S_1[j]}=\{j+1\}$$
 yields that  $y_{j_{\ell+1}}=j_{\ell+1}$ for every $\ell\in[q-1]$.
Thus $y_{i+1}=i+1$ for all indices $i\in\{0,\dots,k-1\}\setminus\{i_p,j_q\}$, so $S_1$ and $S_2$ coincide in all but at most two members.
As a consequence, $S_2$ is either equal to $S_1$ or obtained from $S_1$ by interchanging the two differing members. Therefore the intersection-equivalence class of $S_1$ consists of at most $2$ ordered sets. As $S_1$ was chosen arbitrary, every intersection-equivalence class of $\cS_2$ has size at most $2$.
Select $\cS_3\subseteq\cS_2$ by choosing an arbitrary representative from each intersection-equivalence class, i.e.\ let $\cS_3$ be the largest subfamily of $\cS_2$ where every two distinct $S_2,S_3\in\cS_3$ are not intersection-equivalent. Then 
$$|\cS_3|\ge |\cS_2|/2\ge 2^{-k-1}|\cS_1|= 2^{-k-1}k!,$$
which concludes the proof of Claim 1.
\\

\noindent\textbf{Claim 2.} 
The set $\{Z_S: S\in\cS_3\}$ induces an antichain in $\QQ(\cZ)$ of size $|\cS_3|=k!2^{-k-1}$.\\ 

\noindent\textbf{Remark. } 
Although not necessary for the proof of Theorem \ref{thm_N}, Claim 2 holds in greater generality. 
Analogously to the following proof, one can obtain that for every family $\cS'$ of ordered sets such that any two distinct members of $\cS'$ are type-equivalent and not intersection-equivalent,
the vertices $\{Z_S: S\in\cS'\}$ induce an antichain in $\QQ(\cZ)$ of size $|\cS'|$.
\\

\textit{Proof of Claim 2.} Recall that any two distinct, ordered sets in $\cS_3$ are type-equivalent but not intersection-equivalent. 
We shall show that for every two distinct $S_1,S_2\in\cS_3$, the vertices $Z_{S_1}$ and $Z_{S_2}$ are incomparable.
Assume towards a contradiction that $Z_{S_1}\subseteq Z_{S_2}$.
Since $S_1$ and $S_2$ are not intersection-equivalent, there are same-sized prefixes $S'_1$ of $S_1$ and $S'_2$ of $S_2$ such that $Z_{S'_1}\cap\cY\neq Z_{S'_2}\cap\cY$.
Since $S_1$ and $S_2$ are type-equivalent, both ${S'_1}$ and ${S'_2}$ have the same type, suppose that they are min-type.
\\

First we argue that the sets $Z_{S'_1}\cap\cY$ and $Z_{S'_2}\cap\cY$ are not comparable.
Lemma \ref{lem:Ypart} $(iii)$ shows that $Z_{S'_1}\cap\cY=A_{S'_1}$ and $Z_{S'_2}\cap\cY=A_{S'_2}$.
Type-equivalence implies that pairs of same-sized prefixes of $S'_1$ and $S'_2$ always have the same type.
Thus Lemma \ref{lem:construction} $(ii)$ yields that $|A_{S'_1}|=|A_{S'_2}|$.
We obtain that $Z_{S'_1}\cap\cY=A_{S'_1}$ and $Z_{S'_2}\cap\cY=A_{S'_2}$ are distinct but of the same size, 
consequently $Z_{S'_1}\cap\cY$ and $Z_{S'_2}\cap\cY$ are not comparable.
\\

If $S'_1=S_1$ and $S'_2=S_2$, then $Z_{S_1}\cap\cY$ and $Z_{S_2}\cap\cY$ are incomparable, and so $Z_{S_1}|| Z_{S_2}$, a contradiction to our assumption that $Z_{S_1}\subseteq Z_{S_2}$.
For the remaining proof suppose that the size $|S'_1|=|S'_2|$ is strictly less than $k-1$.
We will show that there is a copy of $\cN$ in $\cF$ contradicting the definition of $\cF$ to be an $\cN$-free poset.
Let $\cY'= \cY-S'_2$, note that $\cF_{S'_2}$ is a $\cY'$-blocker.
Since $Z_{S'_1}\cap\cY$ and $Z_{S'_2}\cap\cY$ are not comparable, there exists an element $a\in Z_{S'_1}\cap\cY$ with $a\notin Z_{S'_2}$.
Lemma \ref{basic-properties} $(ii)$ yields the existence of a vertex $U\in\cF_{S'_2}$ with $U\cap\cY'=\cY'\setminus\{a\}$.
\\

\begin{figure}[h]
\centering
\includegraphics[scale=0.6]{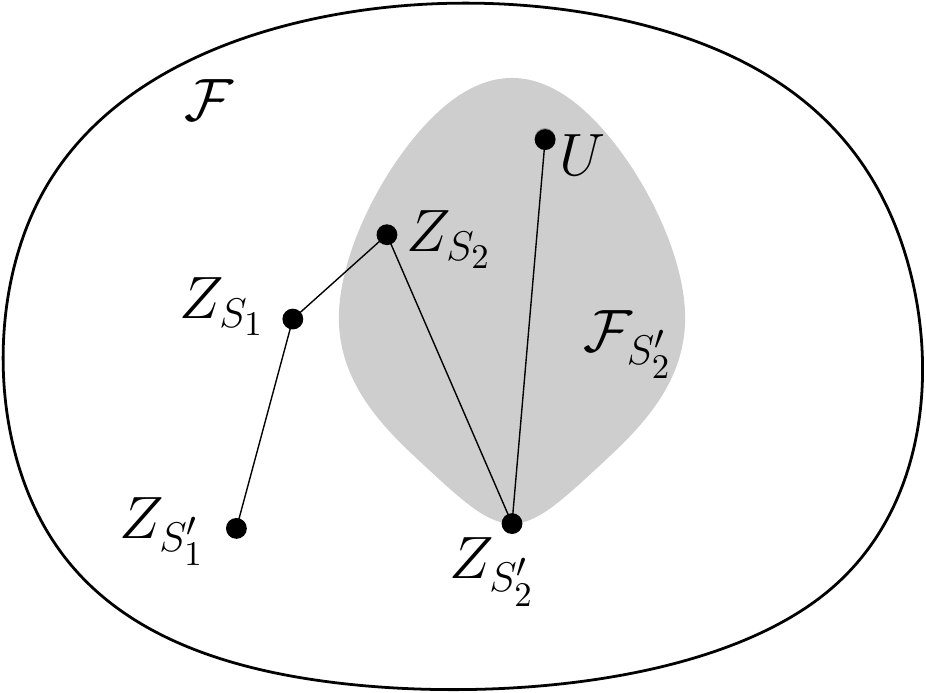}
\caption{Copy of $\cN$ constructed in the proof of Claim 2.}

\label{fig:claim5}
\end{figure}

\noindent Now we verify that $Z_{S'_1}$, $Z_{S_2}$, $Z_{S'_2}$ and $U$ form a copy of $\cN$ in $\cF$, see Figure \ref{fig:claim5}.
\begin{itemize}
\item $Z_{S'_2}\subseteq Z_{S_2}$ because $Z_{S'_2}$ is a minimum of $\cF_{S'_2}$ and $Z_{S_2}\in\cF_{S_2}\subseteq\cF_{S'_2}$ by Lemma \ref{lem:construction} $(i)$.
\item $Z_{S'_1}\subseteq Z_{S_1}\subseteq Z_{S_2}$ as $Z_{S'_1}$ is a minimum of $\cF_{S'_1}$ and $Z_{S_1}\in\cF_{S_1}\subseteq\cF_{S'_1}$ by Lemma \ref{lem:construction} $(i)$.
\item $Z_{S'_1} || Z_{S'_2}$ because $Z_{S'_1}\cap\cY$ and $Z_{S'_2}\cap\cY$ are not comparable.
\item $Z_{S'_2}\subseteq U$ because $Z_{S'_2}$ is a minimum of $\cF_{S'_2}$ and by definition $U\in\cF_{S'_2}$.
\item Note that $a\notin U$ and $a\in Z_{S'_1}$, so $Z_{S'_1}\not\subseteq U$. Since $Z_{S'_2}\not\subseteq Z_{S'_1}$ but $Z_{S'_2}\subseteq U$, transitivity yields $U\not\subseteq Z_{S'_1}$.
Therefore $U$ and $Z_{S'_1}$ are incomparable.
\item We know that $a\notin U$ but $a\in Z_{S_2}$, thus $Z_{S_2}\not\subseteq U$. Note that $Z_{S_2}\cap\cY'\neq \cY'\setminus\{a\}$ since $a\in Z_{S_2}$.
Furthermore, Lemma \ref{lem:Ypart} provides that $Z_{S_2}\cap\cY'\neq\cY'$. Thus $Z_{S_2}\cap\cY'$ is not a superset of $\cY'\setminus\{a\}=U\cap\cY'$, 
therefore $U\not\subseteq Z_{S_2}$. We obtain that $U || Z_{S_2}$.
\item The four vertices are distinct because otherwise we find an immediate contradiction to one of the above relations.
\end{itemize}
Thus, there is a copy of $\cN$ in $\cF$, which is a contradiction to the fact that $\cF$ is $\cN$-free.
\\

If $S'_1$ and $S'_2$ are max-type, a symmetric argument can be applied: As a first step, it follows similarly that $Z_{S'_1}\cap\cY$ and $Z_{S'_2}\cap\cY$ are incomparable,
and then for $\cY'=\cY-S'_1$ and for a vertex $U\in\cF_{S'_1}$ with $U\cap\cY'=\{a\}$ we find a copy of $\cN$ on vertices $Z_{S'_1}$, $Z_{S_1}$, $Z_{S'_2}$ and $U$, which is a contradiction as before. This concludes the proof of Claim 2.
\\

Claims 2 implies the existence of an antichain of size at least $k!2^{-k-1}$ in $\cF\subseteq\QQ(\cZ)$.
We define the \textit{Sperner number} $\alpha(t)$ as the smallest $m\in\N$ such that $\binom{m}{\lfloor m/2\rfloor}\ge t$. It is folklore that $\alpha(t)\ge \log(t)$. 
By Sperner's theorem \cite{S}, $|\cZ|\ge \alpha(k!2^{-k-1})$, so 
$$N=|\cZ|\ge \alpha\left(\frac{k!}{2^{k+1}}\right)\ge \log\left(\frac{k!}{2^{k+1}}\right)\ge \log\left(\frac{k^k}{2^{k+1}e^{k}}\right)\ge k(\log(k)-c),$$
for a fixed constant $c>0$. 
Recall that $N=n+k$, so we obtain that $n\ge k\log k -\big(1+c\big)k$, which implies that $k\le \big(1-o(1)\big)\frac{n}{\log n}$.
Then Theorem \ref{thm:mPk} provides that $$R(\cN,Q_n)\le N=n+k\le n+ \big(1+o(1)\big) \frac{n}{\log n}.$$ The lower bound on $R(\cN,Q_n)$ follows from Theorem~\ref{thm:QnV}.

\end{proof}

\section{Concluding remarks}
In this paper we showed that $R(\cN,Q_n)\le n+ \Theta\left(\frac{n}{\log n}\right)$.
A key ingredient in our approach is Theorem \ref{thm:mPk} where we showed a connection between the poset Ramsey number of $R(P,Q_n)$ for a poset $P$ and the extremal function $m_P(n)$ defined as
$$m_P(n):=\min \{N: ~\text{there is no }  \text{P-free} ~  \text{\cY\!-blocker in }\QQ([N])\text{ for some } \cY\subseteq [N], |\cY|=N-n\}.$$
A $\cY$-blocker can be seen as a \textit{transversal} of a set of specific Boolean lattices, and is related to other notions of \textit{transversals}, e.g.\ \textit{clique-transversals} in graphs as introduced by Erd\H{o}s, Gallai and Tuza \cite{EGT}. Seen in this context, research on $m_P(n)$ or similar extremal functions on $\cY$-blockers might be of independent interest. 
\\

In our proof of Theorem \ref{thm:N} we obtained a bound on $N=n+k$ by finding a large antichain in our $\cN$-free $\cY$-blocker $\cF$ and applying Sperner's theorem, which says that an antichain in $\QQ([N])$ has size at most $\binom{N}{\lceil N/2\rceil}$. 
Alternatively one can utilize the \textit{extremal number} of $\cN$:
Given a poset $P$ and an integer $N$, the \textit{extremal number} $\operatorname{La}^*(N,P)$ is the maximum number of vertices in a $P$-free poset in $\QQ([N])$. 
Methuku and P\'alv\"olgyi \cite{MP} showed that $\operatorname{La}^*(N,P)\le c(P)\binom{N}{N/2}$ which has the same order of magnitude as the value given by Sperner's theorem used in our proof.
Instead of showing that $\cF$ contains a large antichain, i.e.\ many pairwise incomparable vertices, one can simply show that the number of vertices in $\cF$ is large. This argument can be applied for determining $R(P,Q_n)$ for any fixed poset $P$.

\noindent \textbf{Data Availability:}~\quad  Data sharing not applicable to this article as no datasets were generated or analysed during the current study.
\\

\noindent \textbf{Acknowledgments:}~\quad  None.
\\

\noindent \textbf{Funding Statement:}~\quad  The research of both authors was partially supported by Deutsche Forschungsgemeinschaft, grant FKZ AX 93/2-1.
\\

\noindent \textbf{Conflict of Interest:}~\quad None.
\\

\noindent \textbf{Author Contribution Statement :}~\quad M.A. and C.W. wrote and reviewed the manuscript.
\\

\end{document}